\documentclass[11pt,reqno]{amsart}

\usepackage{graphicx}

\hoffset1.3cm
\voffset0.5cm
\headheight=7.15pt \textheight=8.1in \textwidth=5.5in
\oddsidemargin=0in \evensidemargin=0in \topmargin=0in

\newcommand{\K}{{\mathbf K}^\crit}

\newcommand{\RR}{\mathbb{R}}

\newcommand{\CC}{{\mathbb C}}

\newcommand{\C}{{\mathbb C}}

\newcommand{\N}{{\mathbb N}}

\newcommand{\dbar}{\bar\partial}
\newcommand{\ddbar}{\partial\dbar}

\newcommand{\D}{{\mathbf D}}
\newcommand{\E}{{\mathbf E}\,}

\newcommand{\half}{{\frac{1}{2}}}

\newcommand{\ecal}{\mathcal{E}}

\newcommand{\hcal}{\mathcal{H}}
\newcommand{\ical}{\mathcal{I}}

\newcommand{\mcal}{\mathcal{M}}

\newcommand{\pcal}{\mathcal{P}}

\newcommand{\ocal}{\mathcal{O}}

\newcommand{\vcal}{\mathcal{V}}

\def    \half   {{\frac{1}{2}}}

\def    \C  {{\mathbb C}}

 \def   \half   {{\frac{1}{2}}}

\newtheorem{theo}{{\sc Theorem}}[section]
\newtheorem{cor}[theo]{{\sc Corollary}}
\newtheorem{conj}[theo]{{\sc Conjecture}}

\newtheorem{remark}[theo]{{\sc Remark}}
\newtheorem{lem}[theo]{{\sc Lemma}}
\newtheorem{prop}[theo]{{\sc Proposition}}
\newtheorem{definition}[theo]{{\sc Definition}}

\newenvironment{defin-no-number}{\medskip\noindent{\it Definition:\/} }{\medskip}

\def\h#1{\hbox{#1}}

\def\o{\omega}

\def\MA{Monge-Amp\`ere }

\def\K{K\"ahler }
\def\Kno{K\"ahler}
\def\ra{\rightarrow}

\def\vp{\varphi}

\def\isom{\cong}

\def\w{\wedge}
\def\i{\sqrt{-1}}
\def\text{\textstyle}

\def\ra{\rightarrow}

\def\isom{\cong}

\def\del{\partial}

\def\KE{K\"ahler--Einstein }

\def\calP{\pcal}

\def\calM{\mcal}

\def\calH{\hcal} 

\def\calV{\vcal}
\def\calE{\ecal}

\def\calI{\ical}
\def\calO{\ocal}

\def\ginv{g^{-1}}
\def\tr{\hbox{\rm tr}}
\def\D{\Delta}
\def\Vol{\hbox{\rm Vol}}
\def\half{\hbox{$\textstyle\frac12$}}
\def\quarter{\hbox{$\textstyle\frac14$}}
\def\eigth{\hbox{$\textstyle\frac18$}}
\def\sixtienth{\hbox{$\textstyle\frac1{16}$}}

\font\sml=cmr6
\font\smlsev=cmr7

\font\smlfive=cmr5
\font\calfoot=cmsy7

\def\calVsml{{\hbox{\calfoot V}}}

\def\gE{g_{\hbox{\sml E}}}
\def\gC{g_{\hbox{\sml C}}}

\def\gEsml{g_{\hbox{\smlfive E}}}
\def\gCsml{g_{\hbox{\smlfive C}}}
\def\gV{g_{\hbox{\calVsml}}}
\def\gtildeV{g_{\widetilde{\hbox{\calVsml}}}}
\def\gM{g_{\hbox{\sml M}}}
\def\dM{d_{\hbox{\sml M}}}
\def\dE{d_{\hbox{\smlsev E}}}

\def\LE{L_{\hbox{\smlsev E}}}
\def\dC{d_{\hbox{\smlsev C}}}
\def\dV{d_{\hbox{\calVsml}}}
\def\dtildeV{d_{\tilde \calVsml}}

\def\E#1#2{(#1,#2)_{\hbox{\sml E}}}
\def\C#1#2{(#1,#2)_{\hbox{\sml C}}}

\def\Esq#1{|#1|^2_{\hbox{\sml E}}}
\def\Enorm#1{|#1|_{\hbox{\sml E}}}
\def\Vsq#1{|#1|^2_{\hbox{\calVsml}}}

\def\Ric{\hbox{\rm Ric}\,} 
\def\ovp{{\o_{\vp}}}
\def\gvpinv{g_\vp^{-1}}
\def\mathoverr#1#2{\buildrel #1 \over #2}
\def\nablaoo{{\nabla^{1,1}}}
\def\MA{\hbox{\rm MA}\,}
\def\polishl{\char'40l}
\def\Kolodziej{Ko\polishl{}odziej}
\def\Blocki{B\polishl{}ocki}

\providecommand{\tilv}{{\widetilde{\mathcal{V}}}}
\providecommand{\norm}[1]{\lVert#1\rVert}

\title[Ricci flow and the completion of the space of K\"ahler metrics
]
{
Ricci flow and the metric completion of the space of K\"ahler metrics
}

\author{Brian Clarke }
\author{Yanir A. Rubinstein }

\address{Department of Mathematics, Stanford University, Stanford, CA 94305, USA}
\email{bfclarke@stanford.edu, yanir@member.ams.org}

\thanks{\hglue-10pt February 17, 2011.}

\begin{document}

\begin{abstract}

We consider the space 
of K\"ahler metrics as a Riemannian submanifold of the space 
of Riemannian metrics, and study the associated submanifold geometry. 
In particular, we show that the intrinsic and extrinsic distance functions
are equivalent. We also determine the metric completion of the space
of \K metrics, making contact with recent generalizations of the Calabi--Yau
Theorem due to Dinew, Guedj--Zeriahi, and \Kolodziej. 
As an application, we 
obtain a new analytic stability criterion for the existence of a
K\"ahler--Einstein metric on a Fano manifold in terms of the Ricci
flow and the distance function. We also prove that the K\"ahler--Ricci
flow converges as soon as it converges in the metric sense.

\end{abstract}

\maketitle

\section{Introduction}

The study of the infinite-dimensional space $\calH$ of all
\K metrics in a fixed \K class has evolved
essentially independently of the study of the larger space $\calM$ of all
Riemannian metrics on a closed, finite-dimensional base manifold $M$.  
Our first purpose in this article is to draw attention to a simple connection
between the two, going back to Calabi, which does not seem to be
well known. Namely, we consider the space of \K metrics as a
submanifold of the space of Riemannian metrics, and study the induced
intrinsic and extrinsic geometry of 
$\calH$ under this embedding.  

Our first main result is that when the space
of all Riemannian metrics is equipped with 
the Ebin metric (often referred to as the $L^2$ metric), the
intrinsic and extrinsic distance functions are equivalent.  At the
same time, the subspace of \K metrics is in a sense as far from being
totally geodesic as possible---in fact, it shares no common geodesics
with the ambient space, and geodesics in the ambient space intersect
the subspace 
in at most two points.

Building on the equivalence result, we then determine the (metric) 
completion of $\calH$, making contact with some recent deep
results in pluripotential theory, due to Dinew, Guedj--Zeriahi and \Kolodziej,
that generalize the Calabi--Yau Theorem.

These results, combined with recent deep results on
the Ricci flow, are then used to prove a new
analytic characterization of K\"ahler--Einstein manifolds
of positive scalar curvature in terms of the Ricci flow
and the induced distance function. 
This result stands in clear analogy with Donaldson's conjecture
regarding ``geodesic stability,"
with Ricci flow paths taking the place of geodesic rays.
It follows that for the K\"ahler--Ricci flow,
convergence in the induced metric implies smooth convergence.
This, and a related analytic condition that is also
shown to be equivalent to smooth convergence, strengthen
some recent results due to 
Phong--Song--Sturm--Weinkove.

We note that the study of ``constrained" distance and geodesics also
appears naturally in optimal transport in relation to the Wasserstein
metric.  In fact, Carlen--Gangbo \cite{CG} consider a submanifold of
the space of probability measures and study its induced geometry.
There are a number of analogies between their approach, as well as
their results, and the ones in this article.  For instance, the
submanifold they study is naturally a hypersurface---a portion of a
sphere---and a similar situation appears for the space of \K metrics.

Let $M$ be a smooth, closed (i.e., compact and without boundary)
manifold, and consider the infinite-dimensional space $\calM$ of all
smooth Riemannian metrics on $M$. The space $\calM$ may be endowed
with a natural Riemannian structure, which we refer to as the
Ebin metric \cite{E} (cf. \cite{D}), defined as follows,
\begin{equation}
\gE(h,k)|_g:=\int_M \tr (\ginv h \ginv k) dV_g,
\end{equation}
where $g\in\calM$, $h,k\in T_g\calM$ and
$T_g\calM\isom \Gamma(\h{\rm Sym}^2T^\star\! M)$, 
the space of smooth, symmetric $(0,2)$-tensor fields on $M$.
As shown by Freed--Groisser \cite{FG} and Gil-Medrano--Michor \cite{GM},
the curvature of $\gE$ is nonpositive and geodesics
satisfy the equation
\begin{equation}
\label{EbinGeodEq}
(\ginv g_t)_t=
\frac14\tr(\ginv g_t\ginv g_t)\delta
-\frac12\tr(\ginv g_t)\ginv g_t,
\end{equation}
where $\delta$ denotes the Kronecker tensor. The geodesics can be
computed explicitly, however the metric is incomplete, and
in general not every two points can be connected by a geodesic. 
Nevertheless, it has been shown recently that $(\calM,\gE)$ is 
a metric space \cite{Cl1,Cl2}, and a detailed description of
its completion has been provided, including an explicit 
computation of the length-minimizing paths in it
and its distance function $\dE$ \cite{Cl5}.

Now, assume that $M$ admits a \K structure $(M,J,\omega)$, and let
$\calH\subset \calM$ denote the space of all smooth \K metrics on
$(M,J)$ whose \K form is cohomologous to $\omega$. Let $n$ denote
the complex dimension of $M$ and $V$ denote the total volume $M$ with
respect to $\omega$ (which depends only on the cohomology class of
$\omega$).  The space $\calH$, by the $\ddbar$-lemma \cite{GH}, may be
parametrized by a single smooth function, the \K potential,
$$
\calH:=\{g_\vp\,:\, \o_\vp:=\o+\i\ddbar\vp>0\}\subset\calM,
$$
where $g_\vp(\,\cdot\,,\,\cdot\,):=\omega_\vp(\,\cdot\,,J\,\cdot\,)$, and
$\varphi$ is unique up to an additive constant. The corresponding
space of \K potentials is denoted by
$$
\calH_\o:=\{\vp\,:\, \o_\vp:=\o+\i\ddbar\vp>0\}\subset C^\infty(M),
$$
and $\calH\isom\calH_\o/\RR$.
There are several natural candidates for metrics on $\calH$.  
The most widely studied is the Mabuchi metric \cite{M},
\begin{equation}
\label{MabuchiMetricEq}
\gM(\nu,\eta)|_\vp:=\int_M\nu\eta\,\o_\vp^n,
\quad \nu,\eta\in T_\vp\calH_\o\isom C^\infty(M),
\end{equation}
discovered independently also by Semmes \cite{S} and Donaldson
\cite{Do1} (see, e.g., \cite{Ch1} or \cite[Chapter 2]{R1} for an 
exposition and further references). Calabi and Chen proved that
$\gM$ induces a metric space structure on $\mathcal{H}$, 
and that this space has nonpositive curvature in the sense of Alexandrov
\cite{Ch1,CCh}.

Similarly, one may consider 
metrics involving more (or fewer) derivatives.  The Calabi metric 
is defined by
\begin{equation}
\gC(\nu,\eta)|_\vp:=\int_M\D_\vp\nu\D_\vp\eta\,\frac{\o_\vp^n}{n!}.
\end{equation}
This metric was introduced by Calabi in the 1950s in talks
and in a research announcement \cite{C1,C2}, however, since Calabi's
construction depends on---and in fact seems to have prompted---the Calabi--Yau Theorem (see Remark 
\ref{CYThmRemark}), 
the detailed computations leading to his results have appeared in print only 
in a recent article of Calamai \cite{Ca}. In this metric, $\calH$
is a section of a sphere, (i.e., has constant positive sectional curvature) 
of finite diameter, and any two points can be connected by a unique 
(explicit) smooth minimizing geodesic.

The article is organized as follows.
Our first, and elementary, observation, which is undoubtly due to Calabi, 
is that the metric $\gC$
on $\calH$ is simply the metric induced by $\gE$ under the inclusion
$\calH\hookrightarrow\calM$ (Proposition \ref{EbinCalabiProp}).
Thus, as in the situation studied by Carlen--Gangbo, our submanifold
is a portion of a sphere.
In \S\ref{IISection}, the second fundamental form of the inclusion
$\iota_\calH:\calH\hookrightarrow(\calM,\gE)$ is computed,
relying on results of
Ebin, Freed--Groisser, and Gil-Medrano--Michor on the geometry of
$(\calM,\gE)$.
It follows that no geodesic in the Calabi metric is a geodesic in the
Ebin metric, and that geodesics of the Ebin metric 
intersect the space of \K metrics 
in at most two points (Remark \ref{GeodIntersect}).
Then, we prove that the extrinsic and intrinsic distance functions
$\dE$ and $\dC$, respectively, are equivalent on the space of \K
metrics (Theorem \ref{EquivGeneral}).  To do so, we use a
transformation of the ambient space that makes the spherical nature of
$\calH$ self-evident and---analogously to \cite{CG}---compare intrinsic
geodesics (great circles) to extrinsic geodesics (chords).
Motivated by the proof of the equivalence result, we then formulate a criterion
for $\dC$-convergence which improves the criterion for 
$\dE$-convergence \cite[Thm.~4.15]{Cl4} in the ambient space
(\S\ref{CompletionSection}). Next,
we determine the completion of $\calH$ with respect to the geometry
induced by $(\calM,\gE)$ (Theorem \ref{calHdCCompletion}), building upon Theorem \ref{EquivGeneral}
and recent generalizations of the Calabi--Yau Theorem.
In \S\ref{RicciFlowSubsec} we define the notion of Calabi--Ricci stability and
prove, building on the description
of the completion of $\calH$, that it is equivalent to the 
existence of a \KE metric on a Fano manifold (Theorem \ref{CRStabilityThm}).
It follows that the K\"ahler--Ricci flow converges smoothly as soon as 
it $\dC$-converges (Corollary \ref{dCConvSmoothConvCor}), strengthening
a theorem of Phong et al.~\cite{PSSW}.
We also obtain an improved analytic characterization of convergence of the 
flow (Corollary \ref{LoneLtwoConvCor}).
We conclude with some remarks and directions for future study
in \S\ref{FutureStudySection}.

\section{The induced metric}
\label{InducedMetricSection}

We begin by considering the restriction of the Ebin
metric $\gE$ to the space of \K metrics $\calH$.
The computations involve 
the K\"ahler--Riemannian dictionary of translating Hermitian objects 
written with respect to holomorphic coordinates to their
Riemannian counterparts written in real coordinates.
However, we include the detailed, completely elementary, computations
in the proofs in this section since the exact constants are 
important for us later, and in order to avoid confusion between
different possible conventions.

Given a \K metric $g\in\calH\subset\calM$ and a holomorphic
coordinate chart $z_1,\ldots,z_n$, denote by $[g_{i\bar \jmath}]$
the corresponding Hermitian matrix, 
$g_{i\bar \jmath}=g(\frac\del{\del z^i},\frac\del{\del\overline{z^j}})$.
Denote by $[g_{ij}]$ the matrix of coefficients of the metric $g$ regarded
as a Riemannian metric, i.e., $g_{ij}=g(\frac\del{\del x_i},\frac\del{\del x_j}), 
i,j\in\{1,\ldots,2n\}$, with
respect to the real coordinates $x_1,\ldots,x_{2n}$, where
$z_i=x_i+\i x_{i+n},$ $i=1,\ldots,n$. If $G=[g_{i\bar \jmath}]$ then
$g_{i\bar \jmath}=\frac12 g_{ij}+\frac\i2 g_{ij+n}$.
In matrix notation,
\begin{equation}
\label{GtoGEq}
[g_{ij}]
=
\begin{pmatrix}
G+\bar G&(G-G^T)/\i\cr (G^T-G)/\i&G+\bar G
\end{pmatrix}.
\end{equation}

For a function $f\in C^\infty(M)$, we denote
$$
[(\nabla^2 f)_{ij}]
=
\Big[\frac{\del^2 f}{\del x^i\del x^j}\Big]
=
\begin{pmatrix}
A_f&B_f^T\cr B_f&C_f
\end{pmatrix},
$$
then the complex Hessian is given by
\begin{equation}
\label{CxHessianEq}
[f_{i\bar \jmath}]
=
\bigg[\frac{\del^2 f}{\del z^i\del\overline{z^j}}\bigg]
=\frac14(A_f+C_f)+\frac\i4(B_f^T-B_f).
\end{equation}
We consider the map $\iota_{\calH_\o}:\calH_\o\ra\calM$, given as
the composition
\begin{equation}
\label{HoInclusionEq}
\calH_\o
\xrightarrow{g_{\o+\i\ddbar(\mskip3mu\cdot\mskip3mu)}}
\calH\mathoverr{\iota_\calH}\hookrightarrow\calM.
\end{equation}
Its differential
$d\iota_{\calH_\o}:C^\infty(M)\ra\h{\rm Sym}^2T^\star\! M$
is independent of the point $\vp\in\calH_\o$.
By combining (\ref{GtoGEq}) and (\ref{CxHessianEq}), we see
that in local coordinates, $d\iota_{\calH_\o}f$ 
is given by
\begin{equation}
\label{DIotaHEq}
\frac12
\begin{pmatrix}
A_f+C_f&B_f^T-B_f\cr B_f-B_f^T&A_f+C_f
\end{pmatrix}.
\end{equation}
In fact,
\begin{equation}
\label
{NablaOneOneEq}
d\iota_{\calH_\o}=P^{1,1}\circ\nabla^2=:\nabla^{1,1},
\end{equation}
where $P^{1,1}$ denotes the projection of a symmetric $(0,2)$-type
tensor onto its $J$-invariant part, and the action of $J$ on $\h{\rm
  Sym}^2T^\star\! M$ is given by $J \cdot h:=h(J\,\cdot,J\,\cdot\,)$.
The formula \eqref{NablaOneOneEq} holds since $J \cdot \nabla^2f$ is
represented in coordinates by
$$
\begin{pmatrix}
0&-I\cr I&0
\end{pmatrix}
\begin{pmatrix}
A_f&B_f^T\cr B_f&C_f
\end{pmatrix}
\begin{pmatrix}
0&I\cr -I&0
\end{pmatrix},
$$
and $P^{1,1}h=\half(h+J \cdot h)$. 
In this notation, if we let $g_\vp$ denote the metric associated to $\o_\vp$,
then $g_{\vp + f}=g_\vp+\nablaoo f$.

We note that from this description of metrics in $\mathcal{H}$, we
see that $\mathcal{H}$ is the intersection of a closed
affine subspace (within the space of symmetric $(0,2)$-tensor fields)
with $\mathcal{M}$.  Indeed, if $g_0$ is the Riemannian metric
associated to the reference \K form $\omega$, then by the above
discussion any metric $g_\vp \in \mathcal{H}$ is given by $g_0
+ \nablaoo \varphi$.  This shows, in particular, that $\mathcal{H}$ is
an embedded submanifold of $\mathcal{M}$.

Using these preliminaries, we make the following observation, which we
believe is due to Calabi.  It does not seem to be well-known, and
serves as our starting point. It shows that $(\calH,2\gC)$ is
isometrically embedded in $(\calM,\gE)$.

\begin{prop}
\label{EbinCalabiProp}
Consider the inclusion $\iota_{\calH}:\calH\hookrightarrow\calM$.
Then, $\iota_{\calH}^\star\, \gE = 2\gC$.
\end{prop}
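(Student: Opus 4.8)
The plan is to compute both sides on an arbitrary pair of tangent vectors, reduce the pointwise Ebin integrand to the full Hermitian contraction of two complex Hessians, and then convert that contraction into the product of Laplacians by a global integration by parts that uses the \K condition. Fix $\vp\in\calH_\o$ and $f,h\in C^\infty(M)\cong T_\vp\calH_\o$. By \eqref{NablaOneOneEq} the tangent vectors to $\calH$ at $g_\vp$ in the directions $f,h$ are $\nablaoo f,\nablaoo h$, given in real coordinates by \eqref{DIotaHEq}; so it suffices to prove
$$
\gE(\nablaoo f,\nablaoo h)|_{g_\vp}=2\int_M \D_\vp f\,\D_\vp h\,\frac{\o_\vp^n}{n!},
$$
where throughout $\D_\vp f=g_\vp^{i\bar\jmath}f_{i\bar\jmath}$ denotes the complex Laplacian.

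First I would carry out the pointwise linear algebra, tracking the constants singled out as important in the preamble to this section. At a fixed point, choose holomorphic coordinates with $[g_{i\bar\jmath}]=I$; then \eqref{GtoGEq} gives $[g_{ij}]=2I$, so that $g^{-1}=\half I$ as a Riemannian metric. Write $\nablaoo f=\left(\begin{smallmatrix}P_f&Q_f\\-Q_f&P_f\end{smallmatrix}\right)$ with $P_f=\half(A_f+C_f)$ symmetric and $Q_f=\half(B_f^T-B_f)$ antisymmetric, and similarly for $h$. Multiplying the two block matrices, using that $\tr(P_fQ_h)=\tr(Q_fP_h)=0$ (the product of a symmetric and an antisymmetric matrix is trace-free), together with the identification $P_f+\i Q_f=2[f_{i\bar\jmath}]$ coming from \eqref{CxHessianEq}, gives
$$
\tr\big(g^{-1}\nablaoo f\,g^{-1}\nablaoo h\big)=\tfrac14\big(2\tr(P_fP_h)-2\tr(Q_fQ_h)\big)=2\,\tr\big([f_{i\bar\jmath}][h_{i\bar\jmath}]\big).
$$
The last quantity is the coordinate-invariant contraction $2\,g^{i\bar l}g^{k\bar\jmath}f_{i\bar\jmath}h_{k\bar l}$ evaluated at the chosen point, so the identity holds at every point. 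Integrating against $dV_{g_\vp}=\o_\vp^n/n!$ yields
$$
\gE(\nablaoo f,\nablaoo h)|_{g_\vp}=2\int_M g^{i\bar l}g^{k\bar\jmath}f_{i\bar\jmath}h_{k\bar l}\,\frac{\o_\vp^n}{n!}.
$$

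The main obstacle is that this integrand is the \emph{full} contraction $g^{i\bar l}g^{k\bar\jmath}f_{i\bar\jmath}h_{k\bar l}$, whereas the Calabi integrand is the product of traces $\D_\vp f\,\D_\vp h=(g^{i\bar\jmath}f_{i\bar\jmath})(g^{k\bar l}h_{k\bar l})$; these disagree pointwise (e.g.\ for $[f_{i\bar\jmath}]=[h_{i\bar\jmath}]=\diag(1,-1,0,\dots)$), so the two can match only after integration, and this is exactly where the \K structure enters. Writing $f_{i\bar\jmath}=\nabla_i\nabla_{\bar\jmath}f$, I would integrate by parts in the index $i$, then commute $\nabla_i$ past $\nabla_k$ — permissible on a \K manifold, where the mixed Christoffel symbols vanish and the curvature is of type $(1,1)$, so that $[\nabla_i,\nabla_k]=0$ on tensors — and finally integrate by parts in $k$. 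The two boundary-free integrations contribute canceling signs and leave
$$
\int_M g^{i\bar l}g^{k\bar\jmath}f_{i\bar\jmath}h_{k\bar l}\,\frac{\o_\vp^n}{n!}=\int_M\big(g^{k\bar\jmath}f_{k\bar\jmath}\big)\big(g^{i\bar l}h_{i\bar l}\big)\,\frac{\o_\vp^n}{n!}=\int_M \D_\vp f\,\D_\vp h\,\frac{\o_\vp^n}{n!}.
$$

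Equivalently, since $\o_\vp$ is closed the $2n$-form $\i\ddbar f\wedge\i\ddbar h\wedge\o_\vp^{n-2}=d\big(\i\dbar f\wedge\i\ddbar h\wedge\o_\vp^{n-2}\big)$ is exact and hence integrates to zero over the closed manifold $M$; the standard pointwise \K identity expressing this wedge product as a dimensional constant times $\D_\vp f\,\D_\vp h-g^{i\bar l}g^{k\bar\jmath}f_{i\bar\jmath}h_{k\bar l}$ then yields the same conclusion. Substituting this equality into the preceding display gives $\gE(\nablaoo f,\nablaoo h)|_{g_\vp}=2\gC(f,h)|_\vp$, which is the assertion $\iota_\calH^\star\gE=2\gC$. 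I expect the only genuinely non-formal point to be this global integration by parts, together with the attendant use of the \K condition, the remaining work being the careful bookkeeping of constants in the \K--Riemannian dictionary.
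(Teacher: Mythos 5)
Your proposal is correct and follows essentially the same route as the paper's proof: the identical pointwise normal-coordinate matrix computation (the paper's (\ref{AlternativeInnerProductEq})) reducing the Ebin integrand to twice the Hermitian contraction $g_\vp^{i\bar l}g_\vp^{k\bar\jmath}f_{i\bar\jmath}h_{k\bar l}$, followed by the global identity $\int_M g_\vp^{i\bar l}g_\vp^{k\bar\jmath}f_{i\bar\jmath}h_{k\bar l}\,\o_\vp^n/n!=\int_M\D_\vp f\,\D_\vp h\,\o_\vp^n/n!$, which the paper obtains exactly as in your ``equivalently'' paragraph, via the algebraic identity (\ref{AlgebraicIdentityEq}) and exactness of $\i\ddbar f\w\i\ddbar h\w\o_\vp^{n-2}$. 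Your primary covariant integration-by-parts argument is a correct variant of that last step (the commutation $[\nabla_i,\nabla_k]$ is indeed harmless since the \K curvature components $R_{ik\bar l\bar m}$ vanish), and the only point the paper verifies that you assert without proof is the elementary determinant identity $dV_{g_\vp}=\o_\vp^n/n!$.
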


Here and in the sequel, we abuse notation by using $\gC$ to also
denote the metric on $\calH$ obtained by pushing $\gC$ forward to
$\calH$ under the first map in (\ref{HoInclusionEq}) (i.e., we also
write $\iota_{\calH_\o}^\star\, \gE = 2\gC$).  The $\ddbar$-lemma
implies that there is no loss in doing so.
We also note that by abuse of notation, we often write both $\o\in\calH$
and $g_\o\in\calH$.

\begin{proof}
First, note that $\gC$ may be alternatively expressed as
\begin{equation}
\label{AlternativeGCEq}
\gC(\nu,\eta)|_\vp=\int_M(\i\ddbar\nu,\i\ddbar\eta)_{\o_\vp}\,\frac{\o_\vp^n}{n!}.
\end{equation}
To see this, recall the following algebraic identity for any 
$(1,1)$-forms $\beta,\gamma$ and a strictly 
positive $(1,1)$-form $\alpha$ 
(\cite{A},\cite[Lemma 2.77]{B}),
\begin{equation}
\label{AlgebraicIdentityEq}
(\alpha,\beta)_{\alpha}(\alpha,\gamma)_{\alpha}
-
(\beta,\gamma)_{\alpha}
=
\frac{\beta\w\gamma\w\alpha^{n-2}/(n-2)!}
{\alpha^n/n!}.
\end{equation}
Since the right-hand side in this identity is exact whenever
$\beta$ and $\gamma$ are,
and since $(\o_\vp,\i\ddbar\nu)_{\o_\vp}=\D_\vp\nu$, equation (\ref{AlternativeGCEq}) follows.

We claim that 
\begin{equation}
\label{AlternativeInnerProductEq}
2(\i\ddbar\nu,\i\ddbar\eta)_{\o_\vp}=\tr(\gvpinv\,d\iota_{\calH_\o}\nu\, \gvpinv\,d\iota_{\calH_\o}\eta),
\end{equation}
where $d\iota_{\calH_\o}\nu=\frac{d}{dt}\big|_{t=0}g_{\vp+t\nu}$ is as in (\ref{NablaOneOneEq}),
and similarly for $\eta$.
For the proof, it is enough
to verify this identity pointwise. 
If $\o_\vp=\i g_{i\bar \jmath}dz^i\w
d\overline{z^j}$, then the left-hand side of
(\ref{AlternativeInnerProductEq}) is
$$
g_\vp^{i\bar l}g_\vp^{k\bar \jmath}\nu_{i\bar \jmath}\eta_{k\bar l}
=\tr(G_\vp^{-1}RG_\vp^{-1}S),
$$
where $G_\vp=[(g_\vp)_{i\bar \jmath}], R=[\nu_{i\bar \jmath}], S=[\eta_{i\bar \jmath}]$.
By choosing holomorphic normal coordinates at $p\in M$,
$(g_\vp)_{i\bar \jmath}(p)=\delta_{i\bar \jmath}$, and by
(\ref{GtoGEq}) we have
$(g_\vp)_{ij}(p)=2\delta_{ij}$. 
The left-hand side of (\ref{AlternativeInnerProductEq}) equals
$$
\begin{aligned}
\sixtienth\tr\big((A_\nu+C_\nu+\i B_\nu^T-\i B_\nu)(A_\eta+C_\eta+\i B_\eta^T-\i B_\eta)\big),
\end{aligned}
$$
while the right-hand side equals
$$
\tr
\bigg(
2^{-1}\frac12
\begin{pmatrix}
A_\nu+C_\nu&B_\nu^T-B_\nu\cr B_\nu-B_\nu^T&A_\nu+C_\nu
\end{pmatrix}
2^{-1}\frac12
\begin{pmatrix}
A_\eta+C_\eta&B_\eta^T-B_\eta\cr B_\eta-B_\eta^T&A_\eta+C_\eta
\end{pmatrix}
\bigg),
$$
proving (\ref{AlternativeInnerProductEq}).

To conclude the proof, observe that 
$\o^n/n!=\det[g_{i\bar \jmath}]\bigwedge_{k=1}^n\i dz^k\w d\bar{z^k}$,
while $dV_g=\sqrt{\det[g_{ij}]}\bigwedge_{k=1}^{2n} dx^k$.
Note that if $G=A+\i B$ then $[g_{ij}]
=2\begin{pmatrix}A&B\cr-B&A\end{pmatrix}$,
hence $\det[g_{ij}]=2^{2n}|\det [g_{i\bar \jmath}]|^2$ (see, e.g., \cite[Lemma 2]{CP}).
Hence $dV_g=\o^n/n!$, and the proposition follows. 
\end{proof}

Since, as we recall in \S\ref{GeodesicsCalabiSubsec}, $(\calH,\gC)$
has diameter $\pi\sqrt{V}$, it follows that $\calH$ is a bounded 
set in $(\calM,\gE)$ (of diameter at most $\pi\sqrt{2V}$). 
This also follows directly from the fact
the set of all metrics of volume not greater than $V$ in $\calM$
has diameter at most $4 \sqrt{\frac{2V}{n}}$ \cite[Prop.~4.1]{Cl3}.
The latter is a better bound whenever $n>1$, reflecting to some
degree the extent to which $\calH$ is far from being totally geodesic,
as we show in the next subsection.

As a corollary of the proof of Proposition \ref{EbinCalabiProp}, 
we record the following property of tangent vectors to $\calH\subset \calM$.

\begin{lem}
\label{TanHTrace}
  For all $h, k \in T_{g_\varphi} \mathcal{H}$, 
  \begin{equation*}
    \E hk = \frac{1}{4n} \E{\tr(g_\varphi^{-1} h) g_\varphi}
    {\tr(g_\varphi^{-1} k) g_\varphi} = \frac{1}{2} \int_M
    \tr(g_\varphi^{-1} h) \tr(g_\varphi^{-1} k) \, dV_{g_\varphi}. 
  \end{equation*}
\end{lem}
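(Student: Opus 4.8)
The plan is to treat the two equalities separately: the second (middle-to-right) equality is purely algebraic and pointwise, while the first carries the actual content and will follow from Proposition \ref{EbinCalabiProp} together with a single trace computation.

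First I would dispatch the equality between the middle and right-hand expressions. Since $\tr(\gvpinv h)$ and $\tr(\gvpinv k)$ are scalar functions, $\gvpinv\big(\tr(\gvpinv h)\,g_\vp\big)=\tr(\gvpinv h)\,\I$, where $\I$ denotes the identity endomorphism of the $2n$-dimensional tangent space. Hence the integrand defining $\E{\tr(\gvpinv h)g_\vp}{\tr(\gvpinv k)g_\vp}$ equals $\tr(\gvpinv h)\tr(\gvpinv k)\tr(\I)=2n\,\tr(\gvpinv h)\tr(\gvpinv k)$, and the prefactor $\frac1{4n}$ turns this into $\frac12\int_M\tr(\gvpinv h)\tr(\gvpinv k)\,dV_{g_\vp}$. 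Thus it remains to prove $\E hk=\frac12\int_M\tr(\gvpinv h)\tr(\gvpinv k)\,dV_{g_\vp}$.

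For this, write $h=\nablaoo\nu$ and $k=\nablaoo\eta$ with $\nu,\eta\in C^\infty(M)$, which is legitimate by (\ref{NablaOneOneEq}) since $h,k\in T_{g_\vp}\calH$. The one substantive step is the pointwise identity $\tr(\gvpinv h)=2\D_\vp\nu$. I would obtain it invariantly: because $g_{\vp+t\nu}=g_\vp+th$, one has $\tr(\gvpinv h)=\frac{d}{dt}\big|_{t=0}\log\det[(g_{\vp+t\nu})_{ij}]$, and since the proof of Proposition \ref{EbinCalabiProp} gives $\det[g_{ij}]=2^{2n}|\det[g_{i\bar\jmath}]|^2$, differentiating yields $\tr(\gvpinv h)=2\frac{d}{dt}\big|_{t=0}\log\det[(g_{\vp+t\nu})_{i\bar\jmath}]=2(g_\vp)^{i\bar\jmath}\nu_{i\bar\jmath}=2\D_\vp\nu$. (Equivalently, one checks this in holomorphic normal coordinates directly from (\ref{DIotaHEq}) and (\ref{CxHessianEq}), using $(g_\vp)_{ij}=2\delta_{ij}$ there, exactly as in the proof of Proposition \ref{EbinCalabiProp}.) Granting the trace identity, Proposition \ref{EbinCalabiProp} gives $\E hk=2\gC(\nu,\eta)=2\int_M\D_\vp\nu\,\D_\vp\eta\,dV_{g_\vp}$, and rewriting $2\D_\vp\nu\,\D_\vp\eta=\frac12(2\D_\vp\nu)(2\D_\vp\eta)=\frac12\tr(\gvpinv h)\tr(\gvpinv k)$ completes the proof.

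The step I expect to require the most care is not any single computation but the recognition that the asserted identity holds only after integration. The naive pointwise equality $\tr(\gvpinv h\gvpinv k)=\frac12\tr(\gvpinv h)\tr(\gvpinv k)$ is false for $n\ge2$: by (\ref{AlternativeInnerProductEq}) the left side is $2(\i\ddbar\nu,\i\ddbar\eta)_{\o_\vp}$, whereas the right side is $2\,\D_\vp\nu\,\D_\vp\eta$, and these differ pointwise by the exact term appearing in (\ref{AlgebraicIdentityEq}). That term integrates to zero by Stokes' theorem, which is precisely the integration by parts already encoded in the passage from $\gC$ to its alternative form (\ref{AlternativeGCEq}). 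Consequently, invoking Proposition \ref{EbinCalabiProp} as a black box absorbs the only delicate point, and what remains---the trace identity $\tr(\gvpinv h)=2\D_\vp\nu$ together with the bookkeeping of constants---is routine.
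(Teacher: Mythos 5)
Your proof is correct and follows essentially the same route as the paper's: both write $h=\nabla^{1,1}\nu$, $k=\nabla^{1,1}\eta$ via the $\ddbar$-lemma, invoke Proposition \ref{EbinCalabiProp} to get $\E hk = 2\gC(\nu,\eta)$, and reduce everything to the trace identity $\tr(g_\vp^{-1}\nabla^{1,1}\nu)=2\Delta_\vp\nu$, which is the paper's (\ref{LaplacianRealCxEq}). The only cosmetic difference is that you verify this identity invariantly by differentiating $\log\det[g_{ij}]=2n\log 2+2\log\det[g_{i\bar \jmath}]$, whereas the paper checks it directly in holomorphic normal coordinates from (\ref{CxHessianEq}) and (\ref{DIotaHEq})---the alternative you yourself mention in passing.
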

\begin{proof}
Let $\nu,\eta \in T_\varphi \mathcal{H}_\omega$, 
and let $\nabla^{1,1}\nu,\nabla^{1,1}\eta\in T_{g_\vp}\calH\subset T_{g_\vp}\calM$
(recall (\ref{NablaOneOneEq})). 
Then by (\ref{CxHessianEq}) and (\ref{DIotaHEq}),
\begin{equation}
\label{LaplacianRealCxEq}
\Delta_\varphi \nu
= 
\frac{1}{2} \tr(g_\varphi^{-1} \nabla^{1,1}\nu)
\end{equation}
(remembering that if 
$g_{i\bar \jmath}(p)=\delta_{i\bar \jmath}$ then $g_{ij}(p)=2\delta_{ij}$).
Now, by the $\ddbar$-lemma, $\nabla^{1,1}$ is an isomorphism
between $C^\infty(M)/\RR$ and $T_{g_\vp}\calH$. Hence,
given $h,k\in T_{g_\varphi} \mathcal{H}$ there
exist $\nu,\eta\in C^\infty(M)$ with $h=\nabla^{1,1}\nu,\,
k=\nabla^{1,1}\eta$.
So by Proposition \ref{EbinCalabiProp} and (\ref{LaplacianRealCxEq}),
  \begin{equation*}
    \E hk = 2\C \nu\eta= 2 \int_M \Delta_\varphi \nu
    \Delta_\varphi \eta \, \frac{\omega_\varphi^n}{n!} = \frac{1}{2}
    \int_M \tr(g_\varphi^{-1} h) \tr(g_\varphi^{-1} k) dV_{g_\varphi},
  \end{equation*}
as claimed.
\end{proof}

\begin{remark}
{\rm
Lemma \ref{TanHTrace} may be interpreted as saying that the angle 
cut out between $\calH$ and the conformal classes is a constant
depending only on the dimension. For more on this
we refer to \S\ref{sec:angl-betw-mathc}.

}
\end{remark}

\section{The second fundamental form}
\label{IISection}

We now compute the second fundamental form $II$ of 
$\iota_\calH:\calH\hookrightarrow\calM$.
For simplicity we state the result only in terms of 
the trace of $II$. There is no loss in doing so since
$II$ may be recovered from its trace by using
the Green's operator. 
It then follows that 
no geodesic of $(\calH,\gC)$ is a geodesic of $(\calM,\gE)$.

\begin{prop}
\label{IIProp}
The trace of the second fundamental form of the inclusion 
$\iota_\calH:\calH\hookrightarrow (\calM,\gE)$ is given by
$$
\tr(g_{\vp}^{-1} II(h,k))\big|_{g_\vp}
=
-\frac n2\tr(\gvpinv h\gvpinv k)
+\frac14\tr(\gvpinv h)\tr(\gvpinv k)
-\frac1{2V}\E hk,
$$
where $h=\nabla^{1,1}\nu,k=\nabla^{1,1}\eta$, with $\nu,\eta\in C^\infty(M)$ constant
vector fields on $\calH$, and $\nabla^{1,1}$ defined by (\ref{NablaOneOneEq}).
In particular, no geodesic in $(\calH,\gC)$ is a geodesic in $(\calM,\gE)$.

\end{prop}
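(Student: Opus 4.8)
The plan is to compute the full ambient covariant derivative $\nabla^\calM_h k$ of the Ebin metric and then isolate its normal component. First I would extend $h=\nablaoo\nu$ and $k=\nablaoo\eta$ to the constant vector fields on $\calH$ attached to the fixed potentials $\nu,\eta$; this is legitimate because, by (\ref{NablaOneOneEq}), $d\iota_{\calH_\o}=\nablaoo$ is independent of the base point, so $\calH$ sits in the affine space $\Gamma(\mathrm{Sym}^2T^\star M)$ and the flat directional derivative $D_h k$ vanishes. Reading off the Levi--Civita connection of $(\calM,\gE)$ from the geodesic equation (\ref{EbinGeodEq}) by polarization, one finds for such fields the purely algebraic expression
$$
\nabla^\calM_h k=-\frac12\big(h\ginv k+k\ginv h\big)+\frac14\big(\tr(\ginv h)\,k+\tr(\ginv k)\,h\big)-\frac14\tr(\ginv h\ginv k)\,g ,
$$
evaluated at $g=g_\vp$. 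Applying $\tr(\gvpinv\,\cdot\,)$ and using $\tr(\gvpinv g_\vp)=2n$ gives the ``raw'' trace $-(1+\tfrac n2)\tr(\gvpinv h\gvpinv k)+\tfrac12\tr(\gvpinv h)\tr(\gvpinv k)$ at once.

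The real content is to subtract the trace of the tangential part, since $II(h,k)=\nabla^\calM_h k-(\nabla^\calM_h k)^\top$. Because $\nablaoo$ is an isomorphism of $C^\infty(M)/\RR$ onto $T_{g_\vp}\calH$ (as in the proof of Lemma \ref{TanHTrace}), I would write $(\nabla^\calM_h k)^\top=\nablaoo\zeta$, so by (\ref{LaplacianRealCxEq}) its trace is $\tr(\gvpinv\nablaoo\zeta)=2\Delta_\vp\zeta$. The potential $\zeta$ is characterized by $\gE$-orthogonality, $\gE(\nabla^\calM_h k-\nablaoo\zeta,\nablaoo\xi)=0$ for all $\xi$. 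Lemma \ref{TanHTrace} gives $\gE(\nablaoo\zeta,\nablaoo\xi)=2\int_M\Delta_\vp\zeta\,\Delta_\vp\xi\,dV_{g_\vp}$, so everything reduces to evaluating $\gE(\nabla^\calM_h k,\nablaoo\xi)$. I expect this integration by parts to be the main obstacle: one must pair the algebraic tensor above against $\nablaoo\xi=P^{1,1}\nabla^2\xi$ and transfer the two derivatives onto $\xi$, using the \kahler identities (and the pointwise identity (\ref{AlgebraicIdentityEq})) to absorb all curvature contributions. The claim to be verified is that this collapses to $\gE(\nabla^\calM_h k,\nablaoo\xi)=\int_M\psi\,\Delta_\vp\xi\,dV_{g_\vp}$, where $\psi:=-\tr(\gvpinv h\gvpinv k)+\frac14\tr(\gvpinv h)\tr(\gvpinv k)$.

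Granting this, the projection equation reads $2\int_M\Delta_\vp\zeta\,\Delta_\vp\xi\,dV_{g_\vp}=\int_M\psi\,\Delta_\vp\xi\,dV_{g_\vp}$ for all $\xi$; since $\int_M\Delta_\vp\xi\,dV_{g_\vp}=0$, this forces $2\Delta_\vp\zeta=\psi-\bar\psi$, where $\bar\psi:=\frac1V\int_M\psi\,dV_{g_\vp}$ is the unique constant rendering the right-hand side mean-zero. Using the two expressions for $\E hk$ in Lemma \ref{TanHTrace} one gets $\int_M\psi\,dV_{g_\vp}=-\E hk+\frac14\cdot 2\,\E hk=-\frac12\E hk$, hence $\bar\psi=-\frac1{2V}\E hk$ and $\tr(\gvpinv(\nabla^\calM_h k)^\top)=\psi+\frac1{2V}\E hk$. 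Subtracting this from the raw trace yields exactly the stated formula, with the nonlocal term $-\frac1{2V}\E hk$ appearing precisely as $\bar\psi$.

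Finally, for the last assertion I would take $h=k$. Since $\gvpinv h$ is $g_\vp$-self-adjoint, the pointwise Cauchy--Schwarz inequality $\tr(\gvpinv h)^2\le 2n\,\tr(\gvpinv h\gvpinv h)$ gives $-\frac n2\tr(\gvpinv h\gvpinv h)+\frac14\tr(\gvpinv h)^2\le 0$, while $-\frac1{2V}\E hh\le 0$ with equality only if $h=0$; hence $\tr(\gvpinv II(h,h))<0$ for $h\neq 0$, so $II(h,h)\neq 0$. As any geodesic of $(\calH,\gC)$ has $\nabla^{\calH}_{\dot\gamma}\dot\gamma=0$, it is an Ebin geodesic iff $II(\dot\gamma,\dot\gamma)=0$, which never occurs; thus no geodesic of $(\calH,\gC)$ is a geodesic of $(\calM,\gE)$. (Alternatively one could obtain $II(h,h)=\nabla^\calM_{\dot\gamma}\dot\gamma$ directly along the explicit Calabi geodesics, whose tangential acceleration vanishes, but I prefer the self-contained route above.)
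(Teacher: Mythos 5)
Your skeleton is right, and all the bookkeeping you actually carry out is correct: the connection formula read off from \eqref{EbinGeodEq} by polarization agrees with the lemma quoted in the paper, the raw trace $-(1+\tfrac n2)\tr(\gvpinv h\gvpinv k)+\tfrac12\tr(\gvpinv h)\tr(\gvpinv k)$ is right, and the mean-zero normalization forcing $\bar\psi=-\tfrac1{2V}\E hk$ matches the nonlocal term in the statement. But there is a genuine gap at exactly the point you flag as ``the main obstacle'': the identity $\gE(\nabla^{\gEsml}_hk,\nablaoo\xi)=\int_M\psi\,\Delta_\vp\xi\,dV_{g_\vp}$ is asserted, not proved, and it is the entire analytic content of the proposition --- everything else is linear algebra. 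The pairing contains the cubic contraction $\int_M\tr(\gvpinv h\gvpinv k\,\gvpinv\nablaoo\xi)\,dV_{g_\vp}$, whose integrand is genuinely not of the form $\phi\,\Delta_\vp\xi$; reducing it requires an integration by parts in which third-derivative and curvature contributions must cancel across the four terms of $\nabla^{\gEsml}_hk$, and you exhibit none of this. (The claim is in fact true, since it is equivalent to the paper's formula for the tangential component, so your approach could be completed --- but as written the crux is a conjecture.)

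The paper sidesteps this pairing entirely: it computes the Levi-Civita connection of $(\calH_\o,\gC)$ intrinsically via the Koszul formula, using only the first variations $\frac d{ds}\big|_{s=0}\Delta_{\vp+s\psi}\nu=-(\i\ddbar\psi,\i\ddbar\nu)_\ovp$ and of the volume form, with \eqref{AlgebraicIdentityEq} fixing the additive constant (your $\bar\psi$ step is precisely this normalization, done on the metric level); then $II(h,k)=\nabla^{\gEsml}_hk-\nabla^{\gCsml}_hk$ by the Gauss formula, which is legitimate because $\iota_\calH^\star\gE=2\gC$ (Proposition \ref{EbinCalabiProp}) and $\gC$ and $2\gC$ have the same Levi-Civita connection. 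Replacing your unverified pairing claim with that Koszul computation closes your proof with no other changes. Your argument for the final assertion is correct and slightly different from the paper's: you obtain pointwise negativity of $\tr(\gvpinv II(h,h))$ from the Cauchy--Schwarz inequality $\tr(\gvpinv h)^2\le 2n\,\tr((\gvpinv h)^2)$, whereas the paper integrates the trace and invokes Lemma \ref{TanHTrace} to get $\E{II(h,k)}{g_\vp}=-\frac n2\E hk$; your version is in fact a pointwise strengthening, and either suffices.
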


\begin{proof}

For the following formula we refer to 
\cite[p. 19]{E}, or \cite[p. 335]{FG} (or \cite[p. 189]{GM}
with a different sign convention).

\begin{lem}
The Levi-Civita connection of $(\calM,\gE)$ is given by
$$
\nabla^{\gEsml}_hk\big|_g=
-\frac12h\ginv k-\frac12k\ginv h
-\frac14\tr(\ginv h\ginv k)g+\frac14\tr(\ginv h)k+\frac14\tr(\ginv k)h,
$$
for constant vector fields $h,k\in T\calM$.
\end{lem}

Next, we compute the Levi-Civita connection of $(\calH_\o,\gC)$.
We first compute this on the level of \K potentials, and then
translate to the level of metrics.
Let $\nu,\eta,\psi\in T_\vp\calH_\o\isom C^\infty(M)$
be constant vector fields.
Since
$$
\frac d{ds}\Big|_{s=0}\Delta_{\vp+s\psi}\nu=-(\i\ddbar\psi,\i\ddbar\nu)_\ovp,
$$
it follows that 
\begin{multline*}
  \psi(\nu,\eta)=  \int_M
\big(
\Delta_\vp\nu\D_\vp\eta\Delta_\vp\psi 
-(\i\ddbar\psi,\i\ddbar\nu)_\ovp\Delta_\vp\eta \\
-(\i\ddbar\psi,\i\ddbar\eta)_\ovp\Delta_\vp\nu
\big)\frac{\ovp^n}{n!}.
\end{multline*}
The Koszul formula then gives that $\Delta_\vp(\nabla^{\gCsml}_\nu\eta|_\vp)$
is equal, up to a constant, to
$$
-(\i\ddbar\eta,\i\ddbar\nu)_{\o_\vp}+\half\Delta_\vp\eta\Delta_\vp\nu.
$$
By (\ref{AlgebraicIdentityEq}) it follows that
$$
\Delta_\vp(\nabla^{\gCsml}_\nu\eta|_\vp)
=
\half\Delta_\vp\eta\Delta_\vp\nu+\half V^{-1}\int_M(\Delta_\vp\eta\Delta_\vp\nu)\frac{\ovp^n}{n!}
-(\i\ddbar\eta,\i\ddbar\nu)_{\o_\vp}.
$$
On the level of \K forms the tangent vector is expressed as
\begin{equation}
\label{CalabiLCConnEq}
\i\ddbar\Delta_\vp^{-1}\left(
\half\Delta_\vp\eta\Delta_\vp\nu+\half V^{-1}\int_M(\Delta_\vp\eta\Delta_\vp\nu)\frac{\ovp^n}{n!}
-(\i\ddbar\eta,\i\ddbar\nu)_{\o_\vp}\right).
\end{equation}
The corresponding
tangent vector in $T_{g_\vp}\calM$ is given by 
$\nabla^{1,1}(\nabla^{\gCsml}_\nu\eta|_\vp)$ (recall (\ref{NablaOneOneEq})).
Let $h=\nabla^{1,1}\nu$ and $k=\nabla^{1,1}\eta$ be elements of $T_g\calM$.
Slightly abusing notation, we have 
$\nabla^{\gCsml}_hk|_{g_\vp}=\nabla^{1,1}(\nabla^{\gCsml}_\nu\eta|_\vp)$.
By Proposition \ref{EbinCalabiProp}, (\ref{AlternativeInnerProductEq}), 
and (\ref{LaplacianRealCxEq}), on the level of metrics then, 
$$
\half
\tr(g_{\vp}^{-1}\nabla^{\gCsml}_hk)
=
\eigth\tr(\gvpinv h)\tr(\gvpinv k)
+
\quarter V^{-1}\E hk
-\half\tr(\gvpinv h\gvpinv k).
$$
Since, by Proposition \ref{EbinCalabiProp},
$$
II(h,k)=\nabla^{\gEsml}_hk-\nabla^{\gCsml}_hk,
$$
the claimed formula follows (note $\tr(g_\varphi^{-1}g_\varphi)=2n$).

Now, Lemma \ref{TanHTrace} implies
that whenever $h,k\in T_{g_\vp}\calH$,
$$
\E {II(h,k)}{g_\vp}
=
\int_M\tr(\gvpinv II(h,k))dV_{g_\vp}
=
-\frac {n}2\E hk.
$$
Hence, for any nonzero vector $h\in T_{g_\vp}\calH$ 
we have $II(h,h)\not\equiv0$, and this completes the proof
of the Proposition.
\end{proof}

Alternatively, the last conclusion
may be proved by examining the explicit expressions for
the exponential map of $\gE$.
In fact, we will see below (Remark \ref{GeodIntersect}) that 
geodesics of $\mathcal{M}$ intersect $\mathcal{H}$ in at most 
two points.

\section{Intrinsic and extrinsic distance on the space of \K metrics }

By Proposition \ref{EbinCalabiProp}, when $\calH$ is
considered as a submanifold of $(\calM,\gE)$, 
its induced metric precisely coincides with twice
the Calabi metric. Comparing the distance between
\K metrics measured with respect to these two $L^2$
metrics then corresponds to comparing the extrinsic
(Ebin) distance, and the intrinsic (Calabi)
distance. 
Let $\dE,\dC$ denote the distance functions of
$(\calM,\gE)$ and $(\calH,\gC)$, respectively.
The main result of this section (Theorem \ref{EquivGeneral})
shows that these two distance functions are equivalent.

The proof of this fact uses the Calabi--Yau Theorem and the associated
diffeomorphism $\mathcal{H} \cong \mathcal{V}$, where $\mathcal{V}$ is
the space of all smooth volume forms on $M$ with total volume $V =
\Vol(M, \omega)$.  On $\mathcal{V}$, an explicit expression for $\dC$
can be obtained, and we use this to show that $\dC$ is equivalent to
the metric induced from the Ebin metric on the ambient space
$\tilv$ of all smooth volume forms on $M$.  We then translate this
result back to $\calM$,
by using the natural submersion $\calM\ra\tilv$, and the product
structure it induces on $\calM$.

\subsection{The space of volume forms as a submanifold of $\calM$}
\label{VolFormSubmanifSubS}
Our references for this subsection are Ebin \cite{E} and 
Freed--Groisser \cite{FG} (see also \cite[\S2.5.3]{Cl1}).

Consider the space $\tilde\calV$ of all smooth volume forms on $M$.
At any point $\mu$, the tangent space to $\tilv$ is canonically
isomorphic to $\Omega^{2n}(M)$, the space of smooth $2n$-forms on $M$.
On the other hand, for each fixed $\mu\in\tilv$, consider the smooth
submanifold \cite[Lemma 8.8]{E}
$$
\mathcal{M}_\mu:= 
\{g \in \mathcal{M} : dV_g = \mu \}\subset\calM.
$$
Since the map $i_\mu:\calM_\mu\times\tilv\ra\calM$,
$(g,\nu)\mapsto (\nu/\mu)^{2/n}g$ (sending $(g,\nu)$ to the unique metric
conformal to $g$ with volume form $\nu$) is a diffeomorphism,
the space $\calM$ inherits the structure of a product manifold.
Define $\pi : \mathcal{M} \rightarrow \tilv$ by 
$\pi(g) := dV_g$. It is surjective and its differential is
$$
d \pi|_g h = \frac{1}{2} \tr(g^{-1} h) dV_g, \quad g\in\calM,\quad h\in T_g\calM.
$$
When $\calM$ is equipped with the metric $\gE$ and $\tilv$ 
with the metric $\frac4{2n} \gtildeV$, where 
\begin{equation*}
  \gtildeV(\alpha,\beta) := \int_M
    \frac{\alpha}{\mu}
    \frac{\beta}{\mu}
\mu, \quad \alpha,\beta\in T_\mu\tilv\isom\Omega^{2n}(M),
\end{equation*}
the map $\pi$ becomes
a Riemannian submersion whose vertical fibers are of
the form $\calM_\mu$, with vertical tangent spaces
$T_g^v \mathcal{M}
= \{ h \in T_g \mathcal{M} : \tr(g^{-1} h) = 0\}$ and horizontal
tangent spaces $T_g^h \mathcal{M} = C^\infty(M) \cdot g = \{h \in T_g
\mathcal{M} : h = \frac{1}{2n} \tr(g^{-1} h) g \}$.
The leaf of the horizontal distribution through $g\in\calM$
is precisely the conformal class $\calP g$ of $g$, where
$\calP:=\{\,F\in C^\infty(M)\,:\, F>0\}$, and $\calP\isom\tilv$.

\subsection{The space of fixed-volume volume forms}
\label{FixedVolumeFormsSubsec}
Consider the submanifold
$$
\calV:=\left\{\,\mu \hbox{ is a smooth volume form on $M$}\,:\, \int_M\mu=V\right\}
$$
of $\tilv$, and the map $\calH\ni g_\omega\mathoverr{\iota_{\calH,\calV}}\mapsto dV_g=\o^n/n!\in\calV$.
By the maximum principle (and the $\ddbar$-lemma), it is injective \cite{C3}.
Let $\gV$ denote the metric on $\calV$ induced from the inclusion
$\calV\hookrightarrow(\tilv,\gtildeV)$,
$$
\gV(\alpha,\beta)|_\mu=\int_M \frac\alpha\mu\frac\beta\mu\mu, 
\quad \alpha,\beta\in T_\mu\calV\isom \Omega^{2n}_0(M),
$$
where $\Omega^{2n}_0(M)$ denotes the space of smooth $2n$-forms
on $M$ that integrate to zero.
Since $d\iota_{\calH,\calV}|_{g_\o}\nablaoo\nu=\Delta_\o\nu\,\o^n/n!$,
it follows that $\iota_{\calH,\calV}^\star\gV=\gC$.
The Calabi--Yau theorem \cite{Y} states that $\iota_{\calH,\calV}$
is also surjective, and hence that
$(\calH,\gC)$ is isometric to $(\calV,\gV)$.

To summarize, we saw that: (i) when $\tilv$ is considered as a
subspace of $\calM$ via the map $i_\mu(g, \cdot)$ (with some arbitrary
choices of $g$ and $\mu$), its natural $L^2$ metric $\gtildeV$ coincides
with the one induced from $(\calM,\frac n2\gE)$; (ii) by the
Calabi--Yau theorem, Calabi's metric $\gC$ on $\calH$ induces a metric
on $\calV$; and (iii) the latter metric coincides with the
metric induced on $\calV$ from the inclusion
$\calV\hookrightarrow(\tilde\calV,\gtildeV)$.

Hence, $\calV$ inherits two distance functions---the intrinsic distance
$\dV$ from $(\calV,\gV)$ and the extrinsic distance $\dtildeV$ induced
from $(\tilv,\gtildeV)$.
In the next subsection we compute $\dV$,
and in \S\ref{IntrinsicExtrinsicVolSubS} we prove it is equivalent to $\dtildeV$. 
Since
\begin{equation}
\label{CYDistEq}
\iota_{\calH,\calV}^\star \dV=\dC,
\end{equation}
this will allow us in \S\ref{IntExtKahlerSubsec} to estimate $\dC$ from above in terms
of $\dtildeV$.

\begin{remark}
\label{CYThmRemark}
{\rm
Using only that $(\calH,\gC)$ is isometrically embedded in $(\calV,\gV)$,
the ensuing inequality $\iota_{\calH,\calV}^\star \dV\le\dC$ 
would not suffice to prove our main result.
The isomorphism $\iota_{\calH,\calV}^{-1}:\calV\ra\calH$ 
allows us to compute in $(\calV,\gV)$
and then translate back to $\calH$.
In this sense, the Calabi--Yau isomorphism serves as a change
of variable for the equation
$$
(\Delta_\vp \dot\vp)^2
=
\frac{\o_\vp^n}{\o^n}
\Big(
2\Delta_\vp\ddot\vp-2|\i\ddbar\dot\vp|^2_{\o_\vp}
+\frac{C^2}V\frac{\o_\vp^n}{\o^n}
\Big).
$$
In fact, it seems that being able to compute $\dC$ from (\ref{CYDistEq})
was one of Calabi's original
motivations for his well-known conjecture, and the reason
why his results summarized in \cite{C1} remained unpublished until
recently.
}
\end{remark}

\subsection{Geodesics in Calabi's metric}
\label{GeodesicsCalabiSubsec}

Next, we recall the equation for Calabi's 
geodesics on $\calV$ \cite{C1,Ca}. For completeness,
and in order to fix conventions, we derive the equation 
directly, in a slightly different manner than in \cite{Ca}.
 
Let $\mu\in\calV$.
The energy of a path $\{\mu(t)=F(t)\mu\}\subset\calV$ is given by
$$
\int_{[0,1]\times M}\left(\frac{\mu_t}\mu\right)^2\mu\w dt
=
\int_{[0,1]\times M}
\frac{F_t^2}F\mu\w dt
$$
where $F=F(t,z)$, and subscripts denote differentiation.
Taking the first variation of the energy with respect to
variations $\vp(t,s,z)$ fixing the endpoints, we obtain
that for a geodesic $F(t)$,
$$
0
=\int_{[0,1]\times M}
\frac{2F_{ts}F_tF-F_t^2F_s}{F^2}\mu\w dt
=
\int_{[0,1]\times M}
\big(-2(\log F)_{tt}-((\log F)_t)^2\big)F_s\mu\w dt.
$$
The expression in parantheses is orthogonal to 
$\Omega_0^{2n}(M)$, i.e., constant, and by integrating
against $\mu(t)$ is seen to equal $-\frac1V\Vsq{\mu_t(t)}$.
The equation for geodesics of constant speed $C$ is therefore
\begin{equation}
\label{CalabiGeodEq}
F_t^2-2F_{tt}F-\frac{C^2}VF^2=0,
\end{equation}
where $C^2=\Vsq{\mu_t(t)}$, and when $F>0$ this simplifies 
to $(\sqrt F)_{tt}+\frac {C^2}{4V}\sqrt F=0$.
The unit-speed geodesic connecting $F\mu$ and $G\mu$ thus satisfies
\begin{equation*}
  \sqrt{F(t)} = \frac{ \sin\big(\half(T-t)/\sqrt V\big) }{ \sin\big(\half T/\sqrt V\big) } \sqrt F
  +
  \frac{ \sin\big(\half t/\sqrt V\big) }{ \sin\big(\half T/\sqrt V\big) } \sqrt G.
\end{equation*}
where $T$ is the length of the geodesic. Noting that
\begin{equation}
\label{CalabiGeodInitialVelocityEq}
F_t|_{t=T}\mu=\frac {G\mu}{\sqrt V}\cot\big(\half T/{\sqrt V}\big)
-\frac{\sqrt{FG}\mu}{{\sqrt V}\sin\big(\half T/{\sqrt V}\big)}
\end{equation}
must have unit length yields that 
$T=2\sqrt V\cos^{-1}\big(\frac1V\int\sqrt{FG}\,\mu\big)$.
In fact, geodesics minimize length in $(\calV,\gV)$ 
\cite{C1}, \cite[Lemma 6.3]{Ca}, and so
\begin{equation}
\label{dVDistanceFn}
  \dV(\mu_1, \mu_2) = 2\sqrt V\cos^{-1}\left(
    \frac1V\int_M\sqrt{
        \frac{\mu_1}{\mu_0}
        \frac{\mu_2}{\mu_0}
} 
\, \mu_0
  \right), \end{equation}
where $\mu_0 \in \mathcal{V}$ is any fixed volume form.

\subsection{Intrinsic and extrinsic distance on
the space of fixed-volume volume forms}
\label{IntrinsicExtrinsicVolSubS}
We turn to proving the equivalence of the intrinsic and extrinsic
distance functions on the space of fixed-volume volume forms. The
results of this subsection (excepting Remark \ref{GeodIntersect}) hold
on a general Riemannian (and not necessarily K\"ahler) manifold.

Before stating the result we make some remarks. 
The metric $\gtildeV$ is only a weak Riemannian metric, but it
nevertheless induces a metric space structure \cite[Corollary 11]{Cl2};
we denote the distance function by $\dtildeV$. 
On the other hand, the submanifold $\calV$ is not totally geodesic
in $(\tilv,\gtildeV)$ (nor, equivalently, is its inclusion in $(\calM,\gE)$). 
In fact, no geodesic of the latter is a geodesic of the former; this
follows from the explicit formula for geodesics of $\gtildeV$,
\begin{equation}\label{tilvGeod}
\mu(t)=\Big(1+\frac{t\alpha}{2\mu}\Big)^2\mu, \quad \alpha\in T_\mu\tilv,
\end{equation}
which shows that if $\mu_t(0)=\alpha\in T_{\mu}\calV$ then
$\mu_t(t)=\alpha+\frac t2\big(\frac\alpha\mu\big)^2\mu\not\in
\Omega_0^{2n}(M)$ 
 for any $t\ne0$.
Hence, geodesics in $\tilv$ intersect $\calV$ tangentially in at most
one point, and all other intersections are transverse.
In particular, this implies that $\Vol(M,
\mu(t))$ is not constant.  Additionally, by \eqref{tilvGeod}, 
$\Vol(M, \mu(t))$ is quadratic in $t$.  By
transversality then, $\Vol(M, \mu(t)) = V$ for exactly
one positive value of $t$.  Thus, a geodesic of $\tilv$ that intersects
$\mathcal{V}$ does so in exactly two points.

\begin{remark}
\label{GeodIntersect}
{\rm
We make a slight digression to
observe that, similarly, 
geodesics of $(\mathcal{M}, \gE)$ intersect $\mathcal{H}$
in at most two points.
Indeed, let $\{g(t)\}$ be a geodesic of $(\mathcal{M}, \gE)$ with $g := g(0)$
and $h:= g_t(0) \in T_g \mathcal{H}$.
Denote by $\mu(t) :=  \pi(g(t))$ the volume form induced by $g(t)$,
and $h_0:=h-\frac{1}{2n} \tr(g^{-1} h)$. Then
by \cite[Theorem~2.3]{FG},\cite[Theorem~3.2]{GM},
$    \mu(t) =
    \left(
      \left(
        1 + \frac{t}{4} \tr(g^{-1} h)
      \right)^2 + \frac{n}{8} \tr((g^{-1} h_0)^2) t^2
    \right) \mu(0)$.
  From this, the variation in the total volume of
  $\mu(t)$ is
  \begin{equation*}
    \frac{d}{dt} \int_M \mu(t) = \frac{1}{2} \int_M
    \left(
      \tr(g^{-1} h) + \frac{t}{4} \tr(g^{-1} h)^2 + \frac{n}{2}
      \tr((g^{-1} h_0)^2) t
    \right) \mu(0).
  \end{equation*}
  As $h \in T_g \mathcal{H}$, the first term vanishes, 
  implying that if $\mu(t)$ is tangent to
  $\mathcal{H}$ for some $t \neq 0$, then
  $   \int_M
    \left(
      \tr(g^{-1} h)^2 + 2n \tr((g^{-1} h_0)^2)
    \right) \mu(0) = 0.$
   But this gives that $h = 0$, proving $\{g(t)\}$ intersects $\mathcal{H}$
  tangentially in at most one point. 
Since $\Vol(M, g(t))$ is quadratic in $t$, 
$\{g(t)\}$ intersects
  $\mathcal{H}$ in at most two distinct points (the second point where
  $\Vol(M, g(t)) = V$ might
not be \Kno).

}
\end{remark}

At this point, motivated by \cite{Ca,CG}, making what amounts to a change of coordinates on
$\tilv$ allows for a clearer picture of the geometry of $\calV \subset
\tilv$. 
So fix any $\mu_0 \in \tilv$, and consider the map
$\Phi: \tilv \rightarrow \mathcal{P}$ defined by $\Phi(\mu) := 2
\sqrt{\mu/\mu_0}$, which is seen to be a diffeomorphism.
By \eqref{tilvGeod}, a path $\mu(t) = F(t) \mu_0$ is
a geodesic of $\tilv$ if and only if $(\sqrt{F(t)}\,)_{tt} = 0$, that
is, if and only if $\Phi(\mu(t))_{tt} = 0$.  Thus, in the coordinates
defined by $\Phi$,
$\tilv$ is manifestly flat.
Furthermore, we see that $\Phi$ is an isometry (both in the Riemannian
sense and in the sense of metric spaces) between $(\tilv, \gtildeV)$
and its image in $L^2(M, \mu_0)$, since 
$\gtildeV(\alpha, \alpha) = \norm{d \Phi(\mu) \alpha}_{L^2(M, \mu_0)}^2$.

Also apparent is the fact that $\mathcal{V}$ is a section of a sphere.
Indeed, if $\mu \in \mathcal{V}$, then we have
$\norm{\Phi(\mu)}_{L^2(M, \mu_0)} = 2 \sqrt{V}$, so
$\Phi(\mathcal{V})$ is precisely the intersection of the sphere of
radius $2 \sqrt{V}$ in $L^2(M, \mu_0)$ with $\mathcal{P}$.  Either
from this description, or by using the Cauchy--Schwarz inequality to
see that the argument of $\cos^{-1}$ in \eqref{dVDistanceFn} is
strictly between $0$ and $1$ if $\mu_1 \neq \mu_2$, one sees that
great circles on this spherical section have length strictly less than
$\pi \sqrt{V}$, since $\dV(\mu_1, \mu_2) < \pi \sqrt{V}$ for any
$\mu_1, \mu_2 \in \calV$.  Thus, an arc of a great circle connecting
two boundary points of $\Phi(\mathcal{V})$ is at most a
quarter-circle.

Now, as in \cite{CG}, we can see that geodesics in $\mathcal{V}$ are
projections of chordal geodesics in $\tilv$.  Indeed, let $\mu, \nu
\in \mathcal{V}$ be given, and let $\mu(t)$, for $t \in [0,1]$, be the
unique geodesic of $\tilv$ connecting them.  Then by the above
discussion, $\Phi(\mu(t))$ is simply the line segment (chord) between
$\Phi(\mu)$ and $\Phi(\nu)$.  By elementary geometry, we know that the
geodesic (i.e., arc of a great circle) between $\mu$ and $\nu$ on
$\Phi(\mathcal{V})$ is the projection of $\Phi(\mu(t))$ onto
$\Phi(\mathcal{V})$, which is explicitly given by $\sqrt{V / v(t)}
\Phi(\mu(t))$, where $v(t) = \int_M \mu(t)$.  This arc is
length-minimizing in $L^2(M, \mu_0)$, and since $\Phi$ is an isometry,
its length equals $\dV(\mu, \nu)$.

Using that $\Phi$ is an isometry, we also get the following formula for $\dtildeV$.

\begin{lem}\label{tilvDist}
  Let $\mu_1, \mu_2 \in \tilv$ be given.  Then we have
  \begin{equation*}
    \dtildeV(\mu_1, \mu_2) = \norm{\Phi(\mu_2) - \Phi(\mu_1)}_{L^2(M,
      \mu_0)} = 2
    \left(
      \int_M \left( \sqrt{
            \frac{\mu_2}{\mu_0}
} - \sqrt{
            \frac{\mu_1}{\mu_0}
} \right)^2 \, \mu_0
    \right)^{1/2}.
  \end{equation*}
\end{lem}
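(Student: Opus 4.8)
The plan is to lean on the two facts established in the paragraphs just before the statement: that $\Phi(\mu):=2\sqrt{\mu/\mu_0}$ is a diffeomorphism of $\tilv$ onto the positive cone $\mathcal{P}=\{F\in C^\infty(M):F>0\}\subset L^2(M,\mu_0)$ which is simultaneously an isometry of metric spaces onto its image, and that under $\Phi$ the geodesics of $(\tilv,\gtildeV)$ become straight line segments. Granting these, the only genuinely new ingredient is to check that such a segment never leaves $\Phi(\tilv)$, so that it is an admissible competitor realizing the distance.

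First I would record that $\Phi(\tilv)=\mathcal{P}$ is a \emph{convex} subset of $L^2(M,\mu_0)$: a convex combination of two strictly positive smooth functions is again strictly positive. Consequently the chord joining $\Phi(\mu_1)$ and $\Phi(\mu_2)$ lies entirely inside $\mathcal{P}=\Phi(\tilv)$, and hence is itself the $\Phi$-image of a genuine path in $\tilv$. By the discussion preceding the lemma, this chord is precisely $\Phi(\mu(t))$ for the $\gtildeV$-geodesic $\mu(t)$ joining $\mu_1$ to $\mu_2$.

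Next I would invoke that a straight segment is the \emph{global} length-minimizer between its endpoints in the flat space $L^2(M,\mu_0)$; since it stays inside $\Phi(\tilv)$, it is \emph{a fortiori} length-minimizing among all paths constrained to $\Phi(\tilv)$. Because $\Phi$ is a metric-space isometry onto its image (so that $\dtildeV$ equals the intrinsic path-length distance within $\Phi(\tilv)$), this transfers back to give that $\mu(t)$ realizes $\dtildeV(\mu_1,\mu_2)$ and that
$$
\dtildeV(\mu_1,\mu_2)=\norm{\Phi(\mu_2)-\Phi(\mu_1)}_{L^2(M,\mu_0)}.
$$

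Finally I would unwind the definition of $\Phi$ to obtain the explicit expression,
$$
\norm{\Phi(\mu_2)-\Phi(\mu_1)}_{L^2(M,\mu_0)}^2=\int_M\Big(2\sqrt{\mu_2/\mu_0}-2\sqrt{\mu_1/\mu_0}\Big)^2\mu_0=4\int_M\Big(\sqrt{\mu_2/\mu_0}-\sqrt{\mu_1/\mu_0}\Big)^2\mu_0,
$$
and taking square roots produces the stated factor of $2$. The step I expect to be the crux is the convexity observation: without knowing that the chord stays inside $\Phi(\tilv)$, the ambient $L^2$ distance could in principle be strictly smaller than the constrained (intrinsic) distance $\dtildeV$, and it is exactly the positivity-preserving property of convex combinations that closes this gap. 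Everything else is formal bookkeeping from the isometry property of $\Phi$ and a one-line computation.
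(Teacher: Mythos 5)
Your proof is correct and takes essentially the same route as the paper, which obtains the lemma directly from the fact that $\Phi$ is an isometry onto its image in $L^2(M,\mu_0)$ together with the preceding observation that $\gtildeV$-geodesics are mapped by $\Phi$ to chords. Your explicit convexity check that the chord between $\Phi(\mu_1)$ and $\Phi(\mu_2)$ stays inside $\Phi(\tilv)=\mathcal{P}$---so that the ambient minimizer is an admissible competitor and the chordal distance cannot undercut $\dtildeV$---makes precise the one point the paper leaves implicit, and it is exactly the right point to verify.
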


\begin{prop}\label{EquivVolForms}
  The intrinsic and extrinsic metrics $\dV$ and
  $\dtildeV$, respectively, are equivalent on $\mathcal{V}$.
  More specifically,
  \begin{equation*}
    \dtildeV \leq \dV <\frac{\pi}{2\sqrt{2}} \dtildeV,
  \end{equation*}
  and these bounds are optimal.
\end{prop}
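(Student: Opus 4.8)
The plan is to exploit the isometry $\Phi$ introduced above to recast the desired inequality as a one-dimensional comparison between arc length and chord length on a round sphere. First I would recall that $\Phi:(\tilv,\gtildeV)\to L^2(M,\mu_0)$ is an isometry, that $\Phi(\calV)$ is the intersection of the sphere of radius $2\sqrt V$ with $\calP$, and that by Lemma \ref{tilvDist} the quantity $\dtildeV(\mu_1,\mu_2)=\norm{\Phi(\mu_2)-\Phi(\mu_1)}_{L^2(M,\mu_0)}$ is exactly the Euclidean chord between $\Phi(\mu_1)$ and $\Phi(\mu_2)$, while by \eqref{dVDistanceFn} the quantity $\dV(\mu_1,\mu_2)$ is the length of the great-circle arc joining the same two points. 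Writing $R=2\sqrt V$ and letting $\theta$ denote the angle these two points subtend at the center, so that $\cos\theta=\frac1V\int_M\sqrt{(\mu_1/\mu_0)(\mu_2/\mu_0)}\,\mu_0$, I would record the elementary spherical identities
\begin{equation*}
\dV=R\theta,\qquad \dtildeV=2R\sin(\theta/2).
\end{equation*}

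The key point is that $\theta$ is confined to $[0,\pi/2)$: since $\mu_1,\mu_2$ are strictly positive, the integral defining $\cos\theta$ is strictly positive, so $\theta<\pi/2$ (equivalently $\dV<\pi\sqrt V$, as already noted). The comparison then reduces to studying the single ratio
\begin{equation*}
\frac{\dV}{\dtildeV}=\frac{R\theta}{2R\sin(\theta/2)}=\frac{\phi}{\sin\phi},\qquad \phi:=\theta/2\in[0,\pi/4).
\end{equation*}
Next I would show that $\phi\mapsto\phi/\sin\phi$ is strictly increasing on $(0,\pi)$: the numerator of its derivative is $\sin\phi-\phi\cos\phi$, which vanishes at $0$ and has positive derivative $\phi\sin\phi$ on $(0,\pi)$. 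Passing to the endpoints gives $\lim_{\phi\to0}\phi/\sin\phi=1$ and $\lim_{\phi\to\pi/4}\phi/\sin\phi=\frac{\pi/4}{\sin(\pi/4)}=\frac{\pi}{2\sqrt2}$, so as $\phi$ sweeps $[0,\pi/4)$ the ratio sweeps $[1,\frac{\pi}{2\sqrt2})$. This is precisely $\dtildeV\le\dV<\frac{\pi}{2\sqrt2}\dtildeV$, with equality in the lower bound only when $\mu_1=\mu_2$.

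Finally I would address optimality. The lower constant $1$ cannot be improved, since $\phi/\sin\phi\to1$ as $\mu_1\to\mu_2$. For the upper constant I must exhibit pairs with $\theta$ arbitrarily close to $\pi/2$, that is, with $\frac1V\int_M\sqrt{(\mu_1/\mu_0)(\mu_2/\mu_0)}\,\mu_0$ arbitrarily small; this is the one step that is not a routine computation. Although every volume form is everywhere positive, I would concentrate $\mu_1$ and $\mu_2$ near disjoint shrinking balls (keeping a negligible positive background so that both remain smooth and of total volume $V$), which drives the overlap integral to $0$. Hence the upper bound is approached but never attained, confirming both the strict inequality and the optimality of $\frac{\pi}{2\sqrt2}$.
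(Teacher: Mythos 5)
Your proof is correct and takes essentially the same route as the paper's own argument for the sharp constant: both pass through the isometry $\Phi$, identify $\dtildeV$ with the chord and $\dV$ with the great-circle arc on the sphere of radius $2\sqrt{V}$ in $L^2(M,\mu_0)$, and exploit the fact that the subtended angle is strictly less than $\pi/2$ (equivalently $\dV<\pi\sqrt{V}$). The only differences are matters of explicitness rather than substance: you verify the monotonicity of $\phi/\sin\phi$ by calculus where the paper appeals to the elementary-geometric fact that longer arcs diverge more from their chords, and you substantiate optimality with a concrete concentration construction (mass on disjoint shrinking balls, driving the overlap integral $\frac1V\int_M\sqrt{(\mu_1/\mu_0)(\mu_2/\mu_0)}\,\mu_0$ to zero) where the paper simply cites the optimality of the diameter bound $\dV<\pi\sqrt V$; the paper additionally records a purely algebraic convexity argument, which you omit, but that argument only yields the weaker constant $\pi/2$.
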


\begin{proof}
        We first give an essentially algebraic proof of equivalence which
  yields a suboptimal bound.  The optimal bound is given by a
  geometric argument.

  So suppose we are given $\mu_1, \mu_2 \in \mathcal{V}$.
                                                      As noted above, $\dV(\mu_1, \mu_2)<\pi\sqrt V$.
  Hence, by convexity of $x-\frac\pi2\sin x$ on $[0,\pi/2]$,
  \begin{equation*}
    \begin{aligned}
      \frac1{2\sqrt V} \dV(\mu_1, \mu_2) &< \frac{\pi}{2}
      \sin\big(\half \dV(\mu_1, \mu_2)/\sqrt V\big) = \frac{\pi}{2}
      \left( 1 - \cos^2 \big(\half \dV(\mu_1, \mu_2)/\sqrt V\big)
      \right)^{1/2} \\
      &= \frac{\pi}{2} \left( 1 - \left( \frac1V\int_M \sqrt{
                        \frac{\mu_1}{\mu_0}
                        \frac{\mu_2}{\mu_0}
                      } \, \mu_0 \right)^2 \right)^{1/2}.
    \end{aligned}
  \end{equation*}
  Since $1 - x^2 \leq 2 (1 - x)$ for all $x \in \mathbb{R}$, we can
  further estimate
  \begin{equation*}
    \dV(\mu_1, \mu_2) < \pi{\sqrt{2V}} \left( 1 - \frac1V \int_M \sqrt{
                \frac{\mu_1}{\mu_0}
                \frac{\mu_2}{\mu_0}
              } \,
      \mu_0 \right)^{1/2}.
  \end{equation*}
  Since $\int \mu_i=V$, the result follows from Lemma \ref{tilvDist}.

  Now we make use of the geometric discussion preceding the
  proposition.  An arc of a great circle diverges more from a straight
  line the longer it is.  As already noted, great circles between
  boundary points of $\Phi(\mathcal{V})$ can be at most
  quarter-circles.  But such an arc has length equal to 
  $\pi / (2 \sqrt{2})$ times that of the chord between the boundary points.
  Since $\Phi$ is an isometry, we can thus deduce that
  \begin{equation}\label{GeomEquivFactor}
    \dV < \frac{\pi}{2 \sqrt{2}} \dtildeV.
  \end{equation}
  Furthermore, since the bound $\dV(\mu, \nu) < \pi \sqrt{V}$
  given above is optimal, the factor in \eqref{GeomEquivFactor} is
  optimal.
\end{proof}

\subsection{Intrinsic and extrinsic distance on the space
of \K metrics}
\label{IntExtKahlerSubsec}
We are now in a position to prove our first main result.

\begin{theo}
\label{EquivGeneral}
The intrinsic and extrinsic metrics $\dC$ and
  $\dE$, respectively, are equivalent on $\mathcal{H}$.
  More specifically,
  \begin{equation*}
    \frac{1}{\sqrt{2}} \dE \leq \dC < \frac{\pi \sqrt{n}}{4} \dE.
  \end{equation*}
\end{theo}

\begin{proof}
Recall from \S\ref{VolFormSubmanifSubS} that for 
each $g \in \mathcal{M}$, $d\pi|_g$ is an isometry 
between the horizontal tangent space 
$T^h_g\calM\isom T_g\tilv$ at $g$
and the tangent space at $\pi(g)=dV_g\in\tilv$.  In particular, 
if we denote by $\LE$ the $\gE$-length of a path, this implies that
$\LE\big(\{g(t)\}\big) \geq \LE\big(\{\pi(g(t))\}\big)$ for any path $\{g(t)\}$ 
in $\mathcal{M}$, with equality if and only if $g(t)$ is horizontal.
(Note that ``$\LE$'' on the right side of the inequality stands for
the length with respect to $\iota_{\tilv, \mathcal{M}}^\star \gE$.)
In particular, by \S\ref{FixedVolumeFormsSubsec},
\begin{equation}
\label{HorizontalDistEq}
\dtildeV(\mu_1, \mu_2)\leq \sqrt{\frac n2}\dE(\mathcal{M}_{\mu_1},\mathcal{M}_{\mu_2}).
\end{equation}

Let $\omega_1$ and $\omega_2$ denote cohomologous \K forms with volume 
forms $\mu_1$ and $\mu_2$. Combining (\ref{CYDistEq}), Proposition \ref{EquivVolForms},
and  (\ref{HorizontalDistEq}), we have
\begin{equation*}
  \begin{aligned}
    \dC(g_{\omega_1}, g_{\omega_2}) &= \dV(\mu_1, \mu_2) < \frac{\pi}{2\sqrt{2}}
    \dtildeV(\mu_1, \mu_2) \\
    &\leq \frac{\pi \sqrt{n}}{4} \dE(\mathcal{M}_{\mu_1},
    \mathcal{M}_{\mu_2}) \leq \frac{\pi \sqrt{n}}{4} \dE(g_{\omega_1},
    g_{\omega_2}),
  \end{aligned}
\end{equation*}
which is the required upper bound on $\dC$.
This concludes the proof, since the lower bound follows from Proposition \ref{EbinCalabiProp}.
\end{proof}

\section{The completion of $\calH$}
\label{CompletionSection}

In this section, we use the equivalence of $\dtildeV$ and $\dV$ to
first determine the completion of $(\mathcal{V}, \dV)$ 
on a general Riemannian manifold (by ``completion"
we will always mean the metric completion).  From
this, we obtain a simple criterion for the convergence of metrics in
$\mathcal{H}$ with respect to $\dC$. By using recent deep results
from pluripotential theory, this then 
gives a description of the completion of $(\calH,\dC)$.
It can be viewed as giving a geometric description of a 
subset of the class of plurisubharmonic functions
$\calE(M,\o)$ (to be defined below). 

The completions of $\tilv$ and $\mathcal{V}$ can be quickly obtained
using the map $\Phi$ defined in the last section.  
First, though, we recall several elementary facts from functional
analysis.  Let $(X, \mu)$ be a measure space with $\mu(X) < \infty$.

\begin{definition}\label{UnifIntDef}
  A collection $\mathcal{F}$ of measurable functions on $X$ is called
  \emph{uniformly integrable} if, for each $\epsilon > 0$, there
  exists $t \geq 0$ such that for all $f \in \mathcal{F}$,
  \begin{equation*}
    \int_{\{ x \in X \, : \, \lvert f(x) \rvert \geq t \}} \lvert f(x)
    \rvert \, d\mu(x) < \epsilon.
  \end{equation*}
\end{definition}

\begin{lem}[Vitali's Convergence Theorem; {\cite[Thm.~8.5.14]{Ra}, \cite[(13.38)]{HS}}]\label{VitConv}
  A sequence $\{f_k\}$ in $L^p(X, \mu)$ converges to $f \in L^p(X,
  \mu)$ if and only if $f_k$ converges to $f$ in measure and $\{
  \lvert f_k \rvert^p \, : \, k \in \N \}$ is uniformly integrable.
\end{lem}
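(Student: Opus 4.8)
The plan is to prove both implications, using throughout that $\mu(X)<\infty$ and the elementary inequality $|a+b|^p\le 2^{p-1}(|a|^p+|b|^p)$. As a preliminary I would record the equivalence, valid on a finite measure space, between the tail form of uniform integrability in Definition \ref{UnifIntDef} and the conjunction of ``$L^1$-boundedness'' with ``equi-absolute continuity'' (for every $\epsilon>0$ there is $\delta>0$ so that $\int_E|g|\,d\mu<\epsilon$ for all $g\in\mathcal F$ whenever $\mu(E)<\delta$). The tail form immediately gives the bound $\int_X|g|\,d\mu\le \epsilon+t\,\mu(X)$ and, by splitting $E=(E\cap\{|g|\ge t\})\cup(E\cap\{|g|<t\})$, equi-absolute continuity; conversely Chebyshev's inequality $\mu(\{|g|\ge t\})\le t^{-1}\sup_{g}\int_X|g|\,d\mu$ recovers the tail bound from the other two. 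This equivalent description makes domination and finite sums of uniformly integrable families transparent, which is convenient below.

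For the forward implication, assume $f_k\to f$ in $L^p$. Convergence in measure is immediate from Markov's inequality $\mu(\{|f_k-f|\ge\epsilon\})\le\epsilon^{-p}\|f_k-f\|_p^p\to0$. For uniform integrability of $\{|f_k|^p\}$, I would first note that a single integrable function has uniformly integrable modulus (by dominated convergence, $\int_{\{|f|^p\ge t\}}|f|^p\,d\mu\to0$ as $t\to\infty$), so the finitely many initial terms are harmless; for the tail I would use $|f_k|^p\le 2^{p-1}(|f_k-f|^p+|f|^p)$, controlling the $|f_k-f|^p$ contribution by $2^{p-1}\|f_k-f\|_p^p$ (small for large $k$) and the $|f|^p$ contribution via equi-absolute continuity of $|f|^p$ together with the uniform smallness of $\mu(\{|f_k|^p\ge t\})$ coming from the uniform bound $\sup_k\|f_k\|_p<\infty$.

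For the reverse implication, assume $f_k\to f$ in measure and $\{|f_k|^p\}$ uniformly integrable. First, $f\in L^p$: convergence in measure yields an a.e.-convergent subsequence, and Fatou's lemma along it, with the uniform bound $\sup_k\|f_k\|_p^p<\infty$ furnished by uniform integrability, gives $\int_X|f|^p\,d\mu\le\liminf_k\int_X|f_k|^p\,d\mu<\infty$. Next, $|f_k-f|^p\le 2^{p-1}(|f_k|^p+|f|^p)$ shows $\{|f_k-f|^p\}$ is uniformly integrable (a finite sum / domination, transparent in the equi-absolutely-continuous description). It then remains to prove the key lemma: if $g_k\ge0$, $g_k\to0$ in measure, and $\{g_k\}$ is uniformly integrable, then $\int_X g_k\,d\mu\to0$; applying it to $g_k=|f_k-f|^p$ concludes the proof.

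I expect this last lemma to be the main point. I would prove it by a three-way split: given $\epsilon$, choose $t$ by uniform integrability so that $\int_{\{g_k\ge t\}}g_k\,d\mu<\epsilon/3$ uniformly in $k$; choose $\eta>0$ with $\eta\,\mu(X)<\epsilon/3$ to bound $\int_{\{g_k<\eta\}}g_k\,d\mu$; and estimate the middle piece by $\int_{\{\eta\le g_k<t\}}g_k\,d\mu\le t\,\mu(\{g_k\ge\eta\})$, which tends to $0$ as $k\to\infty$ by convergence in measure and so is $<\epsilon/3$ for large $k$. Summing gives $\int_X g_k\,d\mu<\epsilon$ eventually. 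The subtleties to watch are that convergence in measure enters only through $\mu(\{g_k\ge\eta\})\to0$ for fixed $\eta$, and that $\mu(X)<\infty$ is used essentially in two places (the small-values term and the $L^1$-bound), which is precisely the standing hypothesis.
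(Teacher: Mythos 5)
Your proof is correct and complete; the paper itself gives no proof of Lemma \ref{VitConv}, quoting it verbatim from \cite{Ra} and \cite{HS}, and your argument---Markov's inequality for convergence in measure, the equivalence of the tail form of uniform integrability with $L^1$-boundedness plus equi-absolute continuity on a finite measure space, the splitting $|f_k|^p\le 2^{p-1}(|f_k-f|^p+|f|^p)$, and the three-way decomposition $\{g_k<\eta\}\cup\{\eta\le g_k<t\}\cup\{g_k\ge t\}$ for the key lemma---is precisely the standard textbook route taken in those references. The only point worth flagging is that the inequality $|a+b|^p\le 2^{p-1}(|a|^p+|b|^p)$ presupposes $p\ge 1$ (for $0<p<1$ one uses $|a+b|^p\le |a|^p+|b|^p$ instead), which is harmless here since the paper invokes the lemma only for $p=1,2$.
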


The next lemma is a simple consequence of Vitali's Convergence
Theorem, but we include its short proof for completeness.

\begin{lem}\label{L1L2}
  A sequence of nonnegative functions $\{ f_k \}$ converges to $f$ in
  $L^2(X, \mu)$ if and only if $f_k^2$ converges to $f^2$ in $L^1(X, \mu)$.
\end{lem}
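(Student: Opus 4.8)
The plan is to prove both directions using Vitali's Convergence Theorem (Lemma \ref{VitConv}), exploiting the elementary algebraic fact that squaring is a nice operation on nonnegative numbers. The statement to prove is that for nonnegative $\{f_k\}$, convergence $f_k \to f$ in $L^2(X,\mu)$ is equivalent to $f_k^2 \to f^2$ in $L^1(X,\mu)$. Since we are given a finite measure space, Vitali characterizes $L^p$ convergence as convergence in measure together with uniform integrability of the $p$-th powers. The key observation is that $\{|f_k|^2 : k \in \N\}$ being uniformly integrable is \emph{literally the same condition} whether we view it as the family of squares (for the $L^2$ statement about $f_k$) or as the family of absolute values of $f_k^2$ (for the $L^1$ statement about $f_k^2$): in Definition \ref{UnifIntDef} applied to $f_k^2$, the integrand is $|f_k^2| = f_k^2 = |f_k|^2$. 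So the uniform-integrability halves of the two Vitali characterizations coincide automatically, and the entire content reduces to reconciling the two convergence-in-measure conditions.

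First I would invoke Lemma \ref{VitConv} twice. For the forward direction, assume $f_k \to f$ in $L^2$. Then $f_k \to f$ in measure and $\{|f_k|^2\}$ is uniformly integrable. The latter is exactly uniform integrability of $\{|f_k^2|\}$. For the convergence in measure of $f_k^2$ to $f^2$, I would argue that convergence in measure is preserved under the continuous map $x \mapsto x^2$ on $[0,\infty)$: passing to subsequences, every subsequence of $f_k$ has a further subsequence converging $\mu$-a.e.\ to $f$, hence whose squares converge a.e.\ to $f^2$, which on a finite measure space yields convergence in measure along that sub-subsequence; since every subsequence has a further subsequence converging in measure to the same limit $f^2$, the full sequence $f_k^2 \to f^2$ in measure. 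Then Vitali (applied with $p=1$) gives $f_k^2 \to f^2$ in $L^1$.

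For the reverse direction, assume $f_k^2 \to f^2$ in $L^1$. Again by Vitali, $f_k^2 \to f^2$ in measure and $\{|f_k^2|\} = \{|f_k|^2\}$ is uniformly integrable, which is the uniform-integrability hypothesis needed for the $L^2$ statement. It remains to deduce $f_k \to f$ in measure from $f_k^2 \to f^2$ in measure; here I use that $x \mapsto \sqrt{x}$ is continuous on $[0,\infty)$ and that all functions are nonnegative, so the same subsequence-of-subsequence argument (a.e.\ convergence of squares along a subsequence forces a.e.\ convergence of the nonnegative square roots, hence convergence in measure on a finite measure space) upgrades to $f_k \to f$ in measure. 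Then Vitali with $p=2$ yields $f_k \to f$ in $L^2$.

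The main obstacle — though it is mild — is the careful handling of the two convergence-in-measure implications, since neither squaring nor square-rooting is uniformly continuous on $[0,\infty)$, so one cannot simply push convergence in measure through the map by a global modulus-of-continuity estimate. The clean way around this is the standard subsequence principle: a sequence converges in measure to a given limit if and only if every subsequence admits a further subsequence converging a.e.\ to that limit, combined with the fact that a.e.\ convergence implies convergence in measure on a finite measure space. Continuity of $t \mapsto t^2$ and $t \mapsto \sqrt t$ on $[0,\infty)$ then transfers a.e.\ convergence through the map without any uniformity, and nonnegativity of the $f_k$ and $f$ ensures the square root lands on the correct branch. Everything else is a direct bookkeeping application of Vitali's theorem, with the crucial simplification being that the uniform-integrability conditions on the two sides are identical by definition and therefore require no work to match up.
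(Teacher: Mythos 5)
Your proof is correct and follows essentially the same route as the paper's: two applications of Vitali's Convergence Theorem, the observation that the uniform-integrability conditions for $\{|f_k|^2\}$ and $\{|f_k^2|\}$ are literally identical, and transferring convergence in measure through $t\mapsto t^2$ and $t\mapsto\sqrt t$ (the paper dismisses this last step as ``clearly,'' whereas you supply the standard subsequence argument, which is a welcome bit of extra care). Like the paper, you also correctly pinpoint that nonnegativity is what makes the converse direction work.
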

\begin{proof}
  Suppose $f_k \rightarrow f$ in $L^2(X, \mu)$.  Then by Lemma
  \ref{VitConv}, $f_k \rightarrow f$ in measure, and $\{ \lvert f_k
  \rvert^2 \}$ is uniformly integrable.  Now, consider the sequence
  $f_k^2$.  Clearly $f_k^2 \rightarrow f_k$ in measure.  Also, $\{
  |f_k^2| \}$ is uniformly absolutely continuous, because it is equal
  to the set $\{ |f_k|^2 \}$.
  The converse direction follows in precisely the same way (and is
  also where nonnegativity of the functions is required).
\end{proof}

With these preliminaries, we can determine the completions of $\tilv$
and $\mathcal{V}$.

\begin{theo}\label{ComplCalV}
  Fix $\mu_0 \in \tilv$.  Let $\tilv_0$ denote the space of all
  nonnegative, measurable sections of $\bigwedge^{2n} T^\star M$.
  That is, $\tilv_0$ consists of all tensor fields\ represented in
  local coordinates by 
  $f \,dx^1 \wedge \cdots \wedge dx^{2n}$,   where $f \geq 0$ is a
  measurable, locally defined function; or globally by $F \mu_0$,
  where $F \geq 0$ is a measurable, globally defined function.
    Then the metric completion of $(\tilv, \dtildeV)$ is given by
  \begin{equation*}
    \overline{(\tilv, \dtildeV)} \cong
        \left\{
      \mu \in \tilv_0: \mu / \mu_0 \in L^1(M, \mu_0)
    \right\},
  \end{equation*}
  i.e., the $L^1$ completion of $\tilv$.  (Here, as usual, we identify
  elements that agree up to a $\mu$-nullset.)
  We also have
  \begin{equation*}
    \overline{(\calV, \dV)} \cong 
    \left\{
      \mu \in \overline{(\tilv, \dtildeV)} : \int_M \mu = V
    \right\}.
  \end{equation*}

  Given an element $\mu$ and a sequence $\{ \mu_k \}$ in
  $\overline{(\tilv, \dtildeV)}$ (resp.~$\overline{(\calV, \dV)}$),
  $\{ \mu_k \}$ converges to $\mu$ if and only if $\int_M |\mu -
  \mu_k| \rightarrow 0$.
\end{theo}
\begin{proof}
  We will prove the results of the theorem for $(\tilv, \dtildeV)$; the
  results for $(\mathcal{V}, \dV)$ then follow directly from
  Proposition \ref{EquivVolForms}.

 The completion of $\Phi(\tilv)$ is given by
  \begin{equation*}
    \overline{\Phi(\tilv)}^{L^2(M, \mu_0)} =
    \left\{
      F \in L^2(M, \mu_0) \, : \, F \geq 0\ \textnormal{$\mu_0$-a.e.}
    \right\}
  \end{equation*}
  Thus, the completion of $(\tilv, \dtildeV)$ can be isometrically
  identified with the image of this set under the map $\Phi^{-1}$,
  where we formally extend $\Phi$ and $\Phi^{-1}$ by the same
  algebraic formulas to nonnegative forms and functions, respectively.

            Now, from Lemma \ref{L1L2}, and because $\Phi(\mu)^2 = 4 \mu /
  \mu_0$ for all $\mu \in \tilv$
it follows that
  the
  completion of and convergence in $(\tilv, \dtildeV)$ are those of
  $L^1(M, \mu_0)$, where we identify $2n$-forms with functions via
  $\Phi$.  The statements of the theorem follow.
\end{proof}

Using this, and the isometry between $(\mathcal{V}, \dV)$ and
$(\mathcal{H}, \dC)$, we get the following corollary---a very simple
criterion for convergence with respect to the Calabi metric.

\begin{cor}\label{dCConv}
  A sequence $\{g_k\} \subset \mathcal{H}$ converges to $g \in
  \mathcal{H}$ with respect to $\dC$ if and only if $dV_{g_k}
  \rightarrow dV_g$ in the $L^1$ sense; i.e.,
  \begin{equation*}
    \int_M |dV_g - dV_{g_k}| \rightarrow 0.
  \end{equation*}
\end{cor}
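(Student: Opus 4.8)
The plan is to transport the entire question through the Calabi--Yau isometry to the space of volume forms, where the explicit description of convergence furnished by Theorem~\ref{ComplCalV} settles it immediately. The bridge is the isometry $\iota_{\calH,\calV}\colon(\calH,\dC)\to(\calV,\dV)$, $g\mapsto dV_g$, recorded in \eqref{CYDistEq} as $\iota_{\calH,\calV}^\star\dV=\dC$. Since the heavy analysis has already been carried out (in Theorem~\ref{ComplCalV}, and upstream in Proposition~\ref{EquivVolForms} and the flattening isometry $\Phi$), the corollary should require no new estimates—only a faithful translation.

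First I would unwind the pullback identity \eqref{CYDistEq} into the pointwise statement $\dC(g_k,g)=\dV(dV_{g_k},dV_g)$ for every $k$. Because $\iota_{\calH,\calV}$ is a bijective isometry of metric spaces, this shows at once that $\{g_k\}$ converges to $g$ in $\dC$ if and only if $\{dV_{g_k}\}$ converges to $dV_g$ in $\dV$. Here both $dV_g$ and each $dV_{g_k}$ are genuine smooth volume forms, hence elements of $\calV\subset\overline{(\calV,\dV)}$.

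Next I would invoke the final assertion of Theorem~\ref{ComplCalV}, which characterizes $\dV$-convergence of a sequence in $\overline{(\calV,\dV)}$ by $L^1$-convergence of the underlying forms: $\{\mu_k\}\to\mu$ in $\dV$ precisely when $\int_M|\mu-\mu_k|\to0$. Applying this with $\mu=dV_g$ and $\mu_k=dV_{g_k}$ yields exactly $\int_M|dV_g-dV_{g_k}|\to0$, the asserted $L^1$ criterion. As each implication used is an equivalence, the two conditions in the corollary are equivalent.

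I expect the only point requiring attention to be a bookkeeping matter rather than a genuine obstacle: Theorem~\ref{ComplCalV} phrases its convergence criterion for sequences and limits lying in the metric completion $\overline{(\calV,\dV)}$, whereas the corollary concerns a smooth limit $g\in\calH$ and smooth approximants $g_k\in\calH$. I would simply note that smooth volume forms already sit inside the completion, so the criterion applies verbatim to $dV_{g_k}$ and $dV_g$; in particular, since $g\in\calH$ maps to a genuine smooth $dV_g\in\calV$ rather than to a boundary point of $\overline{(\calV,\dV)}$, no passage to generalized limits is needed.
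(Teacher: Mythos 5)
Your proof is correct and is precisely the paper's argument: the corollary is stated there as an immediate consequence of the isometry $\iota_{\calH,\calV}^\star\dV=\dC$ (equation \eqref{CYDistEq}, resting on the Calabi--Yau theorem) together with the $L^1$ characterization of $\dV$-convergence in Theorem \ref{calHdCCompletion}'s predecessor, Theorem \ref{ComplCalV}. Your closing remark that smooth volume forms sit isometrically inside the completion, so the convergence criterion applies verbatim, is the right (and only) point of care.
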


Note that this convergence result improves upon that in the
ambient space $(\mathcal{M}, \dE)$ as given in \cite[Thm.~4.15]{Cl4}.
In that result, it is required in addition that the sequence $\{g_k\}
\subset \mathcal{M}$ converges to $g \in \mathcal{M}$ in measure (when
this is defined in a suitable sense).  
This extra
assumption is essential in the ambient space---where there are, in
contrast to $\mathcal{H}$, many metrics inducing the same volume form.

Since $(\calV,\dV)$ and $(\calH,\dC)$ are isometric metric
spaces, Theorem \ref{ComplCalV} determines the completion of $\calH$
with respect to $\dC$. However, at the moment this is only abstractly,
and not on the level of metrics. To describe $\overline{(\calH,\dC)}$
in terms of metrics, it is necessary to appeal to generalizations
of the Calabi--Yau Theorem that give results about the 
domain and image of the Monge-Amp\`ere operator, 
as we now briefly elaborate.

Let
$$
PSH(M,\o):=\{
\vp\in L^1(M,\o^n)\,:\,
\o+\i\ddbar\vp\ge0, \; \vp \h{\ is upper semi-continuous}
 \}.
$$
For 
$\vp\in PSH(M,\o)\cap C^2(M)$ let $\MA(\vp):=(\o+\i\ddbar\vp)^n$
denote the Monge-Amp\`ere operator.
Much work has gone into understanding what the largest subset of 
$PSH(M,\o)$ is to which MA can be extended in a meaningful way.
Bedford and Taylor were able to define MA on $PSH(M,\o)\cap L^\infty(M)$,
and showed that thus defined, it is continuous under decreasing sequences \cite{BT1,BT2}.
Recently, Guedj--Zeriahi showed that MA can be
further extended to
$$
\calE(M,\o):=\left\{
\vp\in PSH(M,\o)\,:\,
\lim_{j\ra\infty}\int_{\{\vp\le-j\}}(\o+\i\ddbar\max\{\vp,-j\})^n= 0\;
\right\},
$$
maintaining continuity under decreasing sequences in $PSH(M,\o)\cap L^\infty(M)$ 
\cite{GZ2}.
We note that this recent development builds upon the work of many
authors, and we refer to \cite{Di,GZ2} for a historical overview and references.

The class $\calE(M,\o)$ is also important since, by other recent results,
a generalized version of the Calabi--Yau Theorem holds for it. To state these
results we recall that a pluripolar set is by definition a subset $A\subset M$
for which there exists a function $\vp\in PSH(M,\o)$ such that
$A\subset \{\vp=-\infty\}$.
Guedj--Zeriahi proved that if $\mu$ is a nonnegative Borel measure
on $M$ that vanishes on all pluripolar sets, then there exists $\vp\in\calE(M,\o)$
satisfying $\o_\vp^n=\mu$, and Dinew showed that such a $\vp$
is unique up to a constant within $\calE(M,\o)$ \cite{GZ2,Di}.

Returning to our previous discussion, we obtain the following
description of the completion of $(\calH,\dC)$.
Fix a smooth volume form $\mu_0\in\calV$.

\begin{theo}
\label{calHdCCompletion}
The metric completion of $(\calH_\o,\dC)$ is given by
$$
\overline{(\calH_\o,\dC)}\isom
\{\vp\in \calE(M,\o)\,:\, \o^n_\vp/\mu_0\in L^1(M,\mu_0)\},
$$
and is a strict subset of $\calE(M,\o)$.
\end{theo}

\begin{proof}
Let 
$\nu\in \overline{(\calV,\dV)}$
represent an element of the completion.
According to Corollary \ref{dCConv}, $\nu/\mu_0\in L^1(M, \mu_0)$. 
In particular, $\nu$ is absolutely continuous with respect to $\mu_0$, and so
any $\mu_0$-nullset is a $\nu$-nullset. (Here, we regard both $\mu_0$ and
$\nu$ as Borel measures.)  Since locally in $\CC^n$, pluripolar sets
are of Lebesgue measure zero (and hence are contained in a Borel
nullset \cite[11.11(d)]{Ru}), it follows that pluripolar sets are $\mu_0$-nullsets
\cite[\S3.1]{Bl1}.

Thus, by Theorem \ref{ComplCalV} and the aforementioned results of Dinew and Guedj--Zeriahi, 
it follows that we have an isomorphism
$$
\overline{(\calH,\dC)}\isom
\{\o_\vp\in \calE(M,\o)\,:\, \sup\vp=0, \, \o^n_\vp/\mu_0\in L^1(M,\mu_0) \}.
$$
The inclusion 
$$
\overline{(\calH,\dC)}\subset \{\omega_\vp \,:\, \vp\in
\calE(M,\o)
\},
$$ 
is strict,
since measures that charge $\mu_0$-nullsets that are not pluripolar are still
in the image of $\calE(M,\o)$ under MA (such examples exist,
cf. \cite[\S3]{K1},\cite[\S5]{GZ1}). 
\end{proof}

We remark that it would be interesting
to understand the regularity properties of the subclass 
$\overline{(\calH_\o,\dC)}\subset\calE(M,\o)$.

\begin{remark}
{\rm
The simpler geodesic completion of $(\calH,\dC)$ can also be
computed.
From \S\ref{GeodesicsCalabiSubsec}, any unit-speed geodesic emanating
from $g\in\calH$ will satisfy
$$
dV_{g(t)}
= 
dV_{g}
\Big( 
G\sqrt V \sin\big(\half t/\sqrt V\big)+\cos\big(\half t/\sqrt V\big)
\Big)^2
$$
for some $G\in C^\infty(M)$ with $\int_M GdV_{g}=0$ and $\int_MG^2dV_g=1$. 
Thus, the geodesic completion can be identified with metrics whose volume
form is smooth, nonnegative and of mass $V$.  This is because 
$G$ changes sign and so
for some maximal time $T\in(0,\pi\sqrt{V})$, the term in
parentheses above will vanish. It then follows by the work of \Kolodziej\ \cite{K1,K2,K3} 
that there exists a unique $\vp\in PSH(M,\o)\cap L^\infty(M)$ 
that, moreover, is H\"older continuous, such that $\o^n_\vp=n!dV_{g(T)}$.
In general $\vp$ will not be $C^2$ (but see \cite{Bl2} for some additional
regularity statements).

}
\end{remark}

\section{Ricci flow, distance, and stability}
\label{RicciFlowSubsec}

An interesting problem is to understand the relation of the Ricci
flow to the geometry of $(\calM,\dE)$. 
On the other hand, a major problem in \K geometry, often referred to as 
the Yau--Tian--Donaldson conjecture, is to characterize
the existence of K\"ahler-Einstein metrics in terms of
some algebraic or analytic notions of ``stability" (the specification
of the appropriate notion being part of the problem).
Our purpose in this section is twofold.
First, we define an analytic stability notion
and prove that it gives a new characterization
of K\"ahler-Einstein Fano manifolds.
Second, we derive new conditions under which the 
K\"ahler--Ricci flow converges.

\subsection{Calabi--Ricci stability and existence of K\"ahler--Einstein metrics}

In the context of the Yau--Tian--Donaldson conjecture, a
 number of algebraic notions of stability have been introduced,
starting with Tian's notion of K-stability \cite{T}, subsequently refined
by Donaldson \cite{Do1.5} and others.
At present it is still a major open problem to show that such
algebraic notions imply the existence of a K\"ahler--Einstein metric,
although much progress has been made (see, e.g., the surveys \cite{PS,Th}).

The first analytic stability criterion for the 
existence of a K\"ahler-Einstein metric on a Fano manifold was 
obtained by Tian \cite{T} in terms of the properness 
of the Mabuchi K-energy \cite{M0}, and this has later been
extended to other energy functionals \cite{SW,R0}.
Another, conjectural, notion of stability is that of ``geodesic
stability," due to Donaldson \cite{Do1} (see also Chen \cite{Ch2}).

\begin{conj} 
\label{DonaldsonConj}
{\rm (See \cite{Do1})} The following are equivalent:\hfill\break
(i) There exists no constant scalar curvature metric in $\calH$.\hfill\break
(ii) There exists a geodesic ray in $(\calH_\o,\gM)$ along which 
the derivative of the K-energy is negative.\hfill\break
(iii) There exists a geodesic ray as in (ii) starting at any point in $\calH$.
\end{conj}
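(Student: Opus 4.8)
The plan is to treat (i)--(iii) as a cycle of implications, disposing of the two elementary directions first and isolating the single genuinely hard one. The implication (iii) $\Rightarrow$ (ii) is immediate, since a destabilizing ray emanating from \emph{every} point is in particular a destabilizing ray. For (ii) $\Rightarrow$ (i) I would argue by contraposition, using the convexity of the Mabuchi K-energy along geodesics of $(\calH_\o,\gM)$ (Mabuchi \cite{M0}, Chen \cite{Ch1}). Suppose a constant scalar curvature metric $g_\ast$ exists; it is a critical point of the K-energy, and by convexity it is a global minimizer, so the K-energy is bounded below on $\calH$. Along any $\gM$-geodesic ray the K-energy is then a convex function on $[0,\infty)$ that is bounded below, and such a function necessarily has non-negative asymptotic slope (a negative asymptotic slope would force the energy to $-\infty$). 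Hence no ray can have strictly negative slope, which is exactly $\neg$(ii). This reduces (ii) $\Rightarrow$ (i) to the variational characterization of constant scalar curvature metrics as minimizers together with geodesic convexity.

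The substance of the conjecture is the reverse direction (i) $\Rightarrow$ (iii): from the \emph{absence} of a minimizer one must manufacture, through an arbitrary base point, a geodesic ray along which the K-energy strictly decreases. The natural route is variational. First I would connect non-existence of a constant scalar curvature metric to the failure of properness of the K-energy, in the spirit of Tian's properness criterion \cite{T}. Failure of properness produces a sequence $g_k\in\calH$ with K-energy bounded above but whose $\dC$-distance (equivalently, by Theorem \ref{EquivGeneral}, $\dE$-distance) to a fixed base point $g_0$ diverges. Using that $(\calH,\gM)$ is a nonpositively curved Alexandrov space \cite{Ch1,CCh}, the unit-speed minimizing segments from $g_0$ to $g_k$ should subconverge to a unit-speed ray emanating from $g_0$, and geodesic convexity should let one transfer the upper bound on the K-energy of the $g_k$ to a non-positive asymptotic slope along the limit ray. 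Independence of the construction from the base point $g_0$---needed for statement (iii) rather than just (ii)---would follow from the comparison between rays from different points afforded by the convexity and the NPC structure.

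The hard part will be the regularity and compactness required to make this limiting argument rigorous. Geodesics of $(\calH_\o,\gM)$ solve a degenerate homogeneous complex Monge--Amp\`ere equation (Semmes, Donaldson \cite{S,Do1}) and are known only to be $C^{1,1}$ (Chen \cite{Ch1}); along such weak geodesics the very definition, continuity, and differentiability of the K-energy are delicate, so the inequality governing the slope must be interpreted and established in a weak, pluripotential-theoretic sense. Compounding this, the candidate limit ray lives \emph{a priori} only in a metric completion of $\calH$---precisely the kind of completion analyzed in \S\ref{CompletionSection}---so one must control its boundary behaviour and show it does not degenerate. These obstacles (which keep the conjecture open; see also Chen \cite{Ch2}) are exactly what motivate the approach of the present paper: replacing $\gM$-geodesic rays by \K--Ricci flow paths, whose parabolic smoothing renders the analogous slope estimates tractable and yields the provable Calabi--Ricci stability counterpart established below.
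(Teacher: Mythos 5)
The statement you were asked about is labeled a \emph{Conjecture} in the paper (Donaldson's geodesic stability conjecture, attributed to \cite{Do1}); the paper offers no proof of it, and indeed it motivates the paper's main result precisely \emph{because} it is open. So there is no proof of the paper's to compare yours against, and your proposal---quite properly---does not close the conjecture either: your treatment of (i) $\Rightarrow$ (iii) is a program, not an argument. You yourself flag the unresolved points, and they are genuine: the candidate limit ray obtained from minimizing segments to a non-properness sequence lives a priori only in a metric completion of $\calH$, and nothing in the sketch prevents it from degenerating there; the K-energy was not known (certainly not at the time of this paper) to be defined, continuous, and convex along Chen's $C^{1,1}$ weak geodesics, so the step ``transfer the upper bound on the K-energy to a non-positive slope along the limit ray'' has no rigorous meaning; and the subconvergence of unit-speed segments to a ray in this infinite-dimensional nonpositively curved space requires a compactness input you do not supply. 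These are exactly the obstructions that keep the conjecture open, and, as you correctly observe, exactly what the paper sidesteps by proving the Ricci-flow analogue, Theorem \ref{CRStabilityThm}, instead.

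One further concrete flaw, in the direction you present as elementary: your contrapositive for (ii) $\Rightarrow$ (i) only rules out rays of strictly negative \emph{asymptotic} slope, whereas statement (ii) as phrased asks for a ray along which the \emph{derivative} of the K-energy is negative. A convex function on $[0,\infty)$ that is bounded below can perfectly well have everywhere-negative derivative (e.g.\ $t \mapsto e^{-t}$; this behavior can even be realized by the squared distance to a suitable unbounded convex set, so it is compatible with the function attaining a global minimum elsewhere in the space). Hence either (ii) must be read as asserting a negative limiting slope---the reading consistent with Donaldson's formulation, under which your convexity argument is sound---or the implication needs more than convexity and boundedness below. Note also that your premise that a constant scalar curvature metric is a global minimizer of the K-energy ``by convexity'' was itself not a theorem when this paper was written, since it requires convexity along weak geodesics (established only later, by Berman--Berndtsson); so even the ``easy'' direction was, at the time, conditional.
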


For the remainder of this section, we assume
$(M,J,\o)$ is
a Fano manifold (i.e., its first 
Chern class $c_1(M)$ is positive), with $[\o]=c_1(M)$.
In stark contrast to the geodesic flow of $\calM$, which 
instantly leaves $\calH$ (Remark \ref{GeodIntersect}),
Hamilton's Ricci flow on $\calM$ (which we will always assume is volume normalized),
defined by 
\begin{equation}
\label{RFEq}
\frac{\del \o(t)}{\del t}=-\Ric \o(t)+\o(t),\quad \o(0)=\o\in\calH,
\end{equation}
preserves $\calH$ \cite{H}
and exists for all $t>0$ by a theorem of Cao \cite{Cao}.

We introduce the following notion of analytic stability.

\begin{definition}
\label{CRDef}
We say that $(M,J)$ is Calabi--Ricci unstable (or CR-unstable)
if there exists a Ricci flow 
that diverges
in $\overline{(\calH,\dC)}$.
Otherwise, we say $(M,J)$ is CR-stable.
\end{definition}

Note that the derivative of the K-energy is negative
along the Ricci flow (\ref{RFEq}). Also, as will follow from the results below,
the definition of CR-instability can be equivalently phrased
in terms of the existence of a Ricci flow of infinite $\dC$-length.
Thus, this definition stands in precise analogy to Donaldson's
geodesic stability, with Ricci flow paths and $\dC$-distance taking the place of 
$\gM$-geodesic rays and $\dM$-distance.

Definition \ref{CRDef} is motivated by the following result, that
similarly stands in clear analogy to Donaldson's conjecture.

\begin{theo}
\label{CRStabilityThm}
A Fano manifold $(M,J)$ is CR-stable if and only if it admits a \KE metric.
Moreover, if it is CR-unstable then any Ricci flow 
diverges in $(\calH,\dC)$.
\end{theo}

\begin{proof}

First, assume that a K\"ahler-Einstein metric exists.
Then, a theorem of Perelman and work of Chen--Tian, Tian--Zhu and
Phong--Song--Sturm--Weinkove implies that any Ricci
flow (\ref{RFEq}) will converge to a \KE metric exponentially fast in any $C^k$ norm
\cite{ChT0,TZ,PSSW}.  In particular, the metrics along the flow are
uniformly equivalent, and  
$\big|\frac{\del\o}{\del t}\big|_{\o(t)}
<C \big|\frac{\del\o}{\del t}\big|_{\o}
<C_1e^{-C_2t}$,
for some $C_1,C_2>0$ independent of $t$.
Hence the $\dC$-length of $\{\o(t)\}_{t\ge0}$ is finite and the
flow converges in $\dC$.

Now, assume that $(M,J)$ admits no K\"ahler-Einstein metric.
The flow (\ref{RFEq}) induces the K\"ahler--Ricci flow
\begin{equation}
\label{KRFEq}
\o_{\vp(t)}^n=\o^n e^{f_\o-\vp(t)+\dot\vp(t)},\quad \vp(0)=c_0,
\end{equation}
on $\calH_\o$, where $\i\ddbar f_\o=\Ric\o-\o$ and $\frac1V\inf_Me^{f_\o}\o^n=1$.
The initial condition $c_0$ is a certain constant uniquely determined
by $\o$, fixed once and for all \cite{ChT0,PSS}.

Recall that the multiplier ideal sheaf associated to a
function $\vp\in PSH(M,\o)$ is defined as the sheaf $\calI(\vp)$
defined for each open set $U\subset M$ by local sections
$$
\calI(\vp)(U)=\{h\in\calO_M(U)\,:\,|h|^2e^{-\vp}\in L^1_{\h{\smlsev loc}}(M)\}.
$$
Such a sheaf is called proper if it is neither zero nor the structure sheaf $\calO_M$,
and is called a Nadel sheaf whenever there 
exists $\epsilon>0$ such that $(1+\epsilon)\vp\in PSH(M,\o)$.
We recall the following result, describing the limiting behavior of
the K\"ahler--Ricci flow in terms of a Nadel multiplier ideal sheaf.

\begin{theo} {\rm \cite[Theorem 1.3]{R3}}
\label{NadelRFThm}
Let $(M,J)$ be a Fano manifold not admitting a \KE metric. Let 
$\gamma\in(n/(n+1),1)$ and let $\o\in\calH$. Then there 
exists 
a subsequence $\{\vp(t_j)\}_{j\ge1}$ of solutions
of (\ref{KRFEq}) with $\lim_{j\ra\infty} t_j=\infty$,
such that $\vp(t_j)-\frac1V\int_M\vp(t_j)\o^n$
converges in the $L^1(M,\o)$-topology to $\vp_\infty\in PSH(M,\o)$
and $\calI(\gamma\vp_\infty)$ is a proper Nadel multiplier ideal
sheaf.

\end{theo}

Now, fix some $\gamma\in(n/(n+1),1)$, and let $\calI(\gamma\vp_\infty)$
be the Nadel sheaf constructed by Theorem \ref{NadelRFThm}.
This sheaf cuts out a subscheme in $M$ whose support, 
which we denote by $S$, is a nonempty subvariety of 
positive codimension \cite{N1}.

The following lemma is an analogue for the Ricci flow 
of a well-known fact for the continuity method \cite{T0,N2}.

\begin{lem}
\label{VolumeConcLemma}
Let $K\subset M\setminus S$ be a compact set, and let 
$\{\vp(t_j)\}_{j\ge1}$ be as in Theorem \ref{NadelRFThm}. Then
$$
\lim_{j\ra\infty}\int_K\o^n_{\vp(t_j)}=0.
$$
\end{lem}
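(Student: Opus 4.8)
The plan is to reduce the statement to a concentration property of the measures $e^{-\tilde\varphi_j}\omega^n$, where $\tilde\varphi_j := \varphi(t_j)-\frac1V\int_M\varphi(t_j)\omega^n$ are the normalized potentials of Theorem \ref{NadelRFThm}, and then to exploit the non-integrability of $e^{-\gamma\varphi_\infty}$ near $S$. First I would rewrite the integrand using the flow equation \eqref{KRFEq}: setting $c_j := \frac1V\int_M\varphi(t_j)\omega^n$, we have $\omega_{\varphi(t_j)}^n=\omega_{\tilde\varphi_j}^n=e^{f_\omega-\tilde\varphi_j+\dot\varphi(t_j)-c_j}\omega^n$, so that $\int_K\omega_{\varphi(t_j)}^n=e^{-c_j}\int_K e^{f_\omega+\dot\varphi(t_j)}e^{-\tilde\varphi_j}\omega^n$. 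Two standard inputs control the non-potential factors: $f_\omega$ is a fixed bounded function, and by Perelman's a priori estimates along the normalized K\"ahler--Ricci flow (as used in \cite{PSSW}) the time derivative is uniformly bounded, $\sup_t\|\dot\varphi(t)\|_{C^0(M)}<\infty$. Hence $\int_K\omega_{\varphi(t_j)}^n\asymp e^{-c_j}\int_K e^{-\tilde\varphi_j}\omega^n$ up to a $j$-independent constant, and running the same computation over all of $M$ together with the fixed total mass $\int_M\omega_{\varphi(t_j)}^n=V$ gives $e^{c_j}\asymp\frac1V\int_M e^{-\tilde\varphi_j}\omega^n$. Combining, the lemma is equivalent to the ratio estimate $\frac{\int_K e^{-\tilde\varphi_j}\omega^n}{\int_M e^{-\tilde\varphi_j}\omega^n}\to 0$.

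For the denominator I would show it tends to $+\infty$. Since $\mathcal{I}(\gamma\varphi_\infty)$ is a proper Nadel sheaf with support $S$, on any neighborhood $U$ of a point of $S$ one has $\int_U e^{-\gamma\varphi_\infty}\omega^n=+\infty$. Passing to a further subsequence so that $\tilde\varphi_j\to\varphi_\infty$ almost everywhere (possible since $L^1$-convergence of the $\omega$-plurisubharmonic $\tilde\varphi_j$ yields a.e.\ convergence after extraction), Fatou's lemma gives $\liminf_j\int_U e^{-\gamma\tilde\varphi_j}\omega^n\ge\int_U e^{-\gamma\varphi_\infty}\omega^n=+\infty$; since $\gamma<1$ and the $\tilde\varphi_j$ are uniformly bounded above by some $A$, one has $e^{-\tilde\varphi_j}\ge e^{-(1-\gamma)A}e^{-\gamma\tilde\varphi_j}$ on $U$, so the denominator blows up. For the numerator, fix $K\subset\subset M\setminus S$; since $e^{-\gamma\varphi_\infty}$ is locally integrable on $M\setminus S$, the semicontinuity of complex singularity exponents (the uniform integrability in the form used by Tian and Nadel, cf.\ \cite{T0,N2}) yields a uniform bound $\sup_j\int_K e^{-\gamma\tilde\varphi_j}\omega^n=:C_K<\infty$. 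The ratio estimate, and hence the lemma, follows once the numerator is controlled against the denominator.

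The main obstacle is precisely this last comparison: the Monge--Amp\`ere density along the flow carries the coefficient $1$ (through the factor $e^{-\tilde\varphi_j}$), whereas the Nadel sheaf, and therefore the uniform integrability on $K$, is only available at the subcritical coefficient $\gamma<1$. On a compact $K$ disjoint from $S$ the function $e^{-\tilde\varphi_j}$ need not be uniformly integrable, so one cannot simply bound the numerator by a constant. I expect to resolve this exactly as in the continuity-method arguments of Tian and Nadel \cite{T0,N2}: one combines the uniform $\gamma$-integrability on $K$ with the divergence $c_j\to+\infty$ forced by the denominator blow-up, using H\"older's inequality and the normalization $\int_M\tilde\varphi_j\,\omega^n=0$ to trade the supercritical exponent on $K$ against the mass escaping to a neighborhood of $S$. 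This is where the specific range $\gamma\in(n/(n+1),1)$ supplied by Theorem \ref{NadelRFThm} enters, guaranteeing enough room between $\gamma$ and $1$ for the interpolation to close.
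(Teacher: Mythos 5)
Your setup coincides with the paper's: you use \rfe{KRFEq} and Perelman's estimate (\ref{PerelmanEstEq}) to reduce the lemma to exponential integrals of the potential, you establish the blow-up of the global integral (this is the paper's (\ref{CxSingExpEq}), quoted from \cite{R3}, and your Fatou argument is a legitimate route to it), and your uniform subcritical bound $\sup_j\int_K e^{-\gamma\tilde\vp_j}\,\o^n\le C_K$ is the paper's (\ref{KEstEq}), which it obtains from \cite[Theorem 3.1]{T0}. However, the step you explicitly leave open---passing from exponent $\gamma<1$ on $K$ to exponent $1$---is the heart of the matter, and the tools you name would not close it. H\"older's inequality can only interpolate between exponents at which you already possess uniform bounds, and you have none above $\gamma$; the normalization $\int_M\tilde\vp_j\,\o^n=0$ yields only a uniform \emph{upper} bound $\sup_M\tilde\vp_j\le A$, with no lower bound on $\inf_M\tilde\vp_j$. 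Nothing in your inputs rules out potentials with poles inside $K$ that are uniformly $e^{-\gamma(\cdot)}$-integrable yet whose $e^{-(\cdot)}$-integrals over $K$ grow as fast as your denominator, so the ratio estimate cannot be closed by formal interpolation.

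What actually closes it in the paper is the Harnack inequality along the flow, $-\inf\vp(t)\le n\sup\vp(t)+C$, which in your normalization reads $-\inf\tilde\vp_j\le (n+1)c_j+C'$ (using $\sup\tilde\vp_j\le A$ and your identity $e^{c_j}\asymp\int_M e^{-\tilde\vp_j}\o^n$). Then the pointwise splitting $e^{-\tilde\vp_j}\le e^{-\gamma\tilde\vp_j}\,e^{-(1-\gamma)\inf\tilde\vp_j}$ gives $\int_K e^{-\tilde\vp_j}\,\o^n\le C_K\,e^{(1-\gamma)(n+1)c_j+C''}$, and dividing by the denominator $\asymp e^{c_j}$ yields a ratio of order $e^{(n-(n+1)\gamma)c_j}\ra 0$---which is exactly, and only, where $\gamma>n/(n+1)$ enters, as you anticipated. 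Your pointer to the continuity-method arguments of Tian and Nadel \cite{T0,N2} is directionally correct, since there the same role is played by Tian's Harnack inequality; but transplanting that inequality to the Ricci flow is nontrivial and is precisely where the paper's proof spends its effort: it deduces the Harnack inequality from the ``weighted'' Poincar\'e inequality of \cite{R3}, the uniform Poincar\'e and Sobolev inequalities along the flow due to Perelman, Ye and Zhang \cite{Ye,Z}, and a Green's function estimate via the heat kernel bounds of Cheng--Li as used by Bando--Mabuchi \cite{CL,BM}. So the concrete missing idea is the flow version of $-\inf\vp\le n\sup\vp+C$; ``H\"older plus zero-average normalization'' is not a substitute for it.
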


\begin{proof}
Given the Sobolev inequality along the Ricci flow \cite{Ye,Z} and Perelman's 
deep estimates for the K\"ahler--Ricci flow \cite{ST,TZ}, 
the proof follows in the same way as the corresponding
result for the continuity method \cite[Prop. 4.1]{N2},  
and so we only outline the proof for completeness. 

By (\ref{KRFEq}) and Perelman's estimate 
(see \cite{ST,ChT0,PSS},
or \cite[Theorem 2.1(i)]{R3})
\begin{equation}
\label{PerelmanEstEq}
|\dot\vp(t)|<C,  
\end{equation}
it suffices to estimate $\int_Ke^{-\vp(t_j)}\o^n$.
Fix $\gamma\in(n/(n+1),1)$. From \cite[(16), (21)]{R3}
\begin{equation}
\label{CxSingExpEq}
\lim_{j\ra\infty}\int_M e^{-\gamma(\vp(t_j)-\sup\vp(t_j))}=\infty.
\end{equation}
It follows then from \cite[Theorem 3.1]{T0} that 
\begin{equation}
\label{KEstEq}
\int_K e^{-\gamma(\vp(t_j)-\sup\vp(t_j))}\o^n<C
\end{equation}
for some uniform constant $C$ depending on $K$ (here and in the statement it might
be necessary to take a subsequence, but we omit this from the notation).
By (\ref{KEstEq}), $$
\int_Ke^{-\vp(t_j)}\o^n\le Ce^{-\gamma\sup\vp(t_j)-(1-\gamma)\inf\vp(t_j)}.
$$
To conclude it suffices to use the Harnack inequality 
$-\inf\vp(t)\le n\sup\vp(t)+C$. Note that in \cite[(15)]{R3} 
the Harnack inequality 
$-\frac1V\int_M\vp(t)\o^n_{\vp(t)}\le n\sup\vp(t)+C$
is proved, and that as in \cite{T0,T0.5} (cf. \cite{Siu}) one can then
deduce from it the previous inequality. Alternatively,
the former follows from the latter via a Green's function estimate 
(cf. \cite[p.~626]{PSS}, \cite[p.~5847]{R3}). Indeed, 
$-\inf\vp(t)\le-\frac1V\int_M\vp(t)\o^n_{\vp(t)}+nA_t$,
where $-A_t$ is the minimum of the Green function of
$(M,\o(t))$, normalized to have average zero (see \cite[p.~5845]{R3}).
As shown by Bando--Mabuchi \cite[(3.4)]{BM}, the 
heat kernel estimate of Cheng--Li \cite[(2.9)]{CL} implies
such an estimate as soon as one has uniform
Poincar\'e and Sobolev inequalities
(assume $n>1$, as $n=1$  is treated in \cite[p.~5847]{R3}), 
and these indeed
hold, by the results of Perelman, Ye and Zhang \cite[Theorem 2.1]{R3}.
Here we note (as pointed out to us by V. Tosatti) 
that the classical ``weighted'' Poincar\'e inequality
\cite[Lemma 2.3]{R3} implies the usual Poincar\'e inequality
by a straightforward argument using the Cauchy-Schwarz inequality.
\end{proof}

It follows from this lemma that $\vp_\infty\not\in\calE(M,\o)$, and
hence by Theorem \ref{calHdCCompletion} that the
limit point $\o_{\vp_\infty}$ of the Ricci flow  
is not in $\overline{(\calH,\dC)}$.
Thus, the Ricci flow with initial condition $\o(0)=\o$
does not converge with respect to $\dC$, and
so $(M,J)$ is CR-unstable.
Since $\o\in\calH$ was arbitrary, this concludes the proof.
\end{proof}

\subsection{Analytic criteria for the convergence 
of the K\"ahler--Ricci flow}

The goal of this subsection is to derive new analytic
criteria for the convergence of the K\"ahler--Ricci flow.

First, we show that $\dC$-convergence of the flow
implies an a priori $C^0$ estimate. Such an estimate
does not follow from \Kolodziej's deep results \cite{K1,K2}
that require slightly stronger control on the volume form than $L^1$.

\begin{theo}
\label{dCCzeroThm}
Assume that the K\"ahler--Ricci flow (\ref{KRFEq}) $\dC$-converges,
i.e., assume that $\{\o^n_{\vp(t)}\}_{t\ge0}$ converges in $L^1$
(see Corollary \ref{dCConv}).
Then there exists a constant $C>0$ independent of $t$ such that
$||\vp(t)||_{C^0(M)}<C$.
\end{theo}

\begin{proof}
The proof can be extracted from the proofs of
Theorem \ref{CRStabilityThm} and \cite[Theorem 1.3]{R3},
but we summarize it below for the reader's convenience.

By \cite[(15), (16), (20)]{R3}, the following a priori estimates
hold:
$$
\begin{aligned}
\frac1V\int_M-\vp(t)\o^n_{\vp(t)}
&\le \frac nV\int_M\vp(t)\o^n,
\cr
\sup\vp(t)&\le \frac1V\int_M\vp(t)\o^n +C,
\cr
-\inf\vp(t)&\le \frac CV\int_M-\vp(t)\o^n_{\vp(t)}.
\end{aligned}
$$
Let $\gamma\in(n/(n+1),1)$.
These inequalities, combined with Perelman's
estimate (\ref{PerelmanEstEq}), imply
that a uniform bound
on $\int_M e^{-\gamma(\vp(t)-\frac1V\int_M\vp(t)\o^n)}\o^n$
leads to a uniform estimate $||\vp(t)||_{C^0(M)}<C$.

Hence, supposing that $\{||\vp(t)||_{C^0(M)}\}_{t\in[0,\infty)}$ is unbounded,
it follows by \cite{R3} that one can find a subsequence 
$\{\vp(t_j)\}_{j\ge1}$
as in Theorem \ref{NadelRFThm}, which by Lemma \ref{VolumeConcLemma}
satisfies 
$\lim_{j\ra\infty}(\vp(t_j)-\frac1V\int_M\vp(t_j)\o^n)=\vp_\infty
\in PSH(M,\o)\setminus\calE(M,\o)$. 
Thus, one can conclude as in the proof of Theorem \ref{CRStabilityThm}.
\end{proof}

As is well-known, once the crucial $C^0$ estimate is
established for the flow, higher derivative estimates then
follow \cite{Y,Cao,ChT0,TZ,PSS,Pa}, and one obtains
smooth convergence up to automorphisms \cite{ChT0,PSS,TZ}.
On the other hand, a theorem of Phong et al. \cite{PSSW} shows 
that the latter convergence implies exponential convergence
of the original flow.
To summarize, we have the following statement that
shows that 
the very weak notion of $\dC$-convergence
(Corollary \ref{dCConv}) implies such strong convergence.

\begin{cor}
\label
{dCConvSmoothConvCor}
If $\{\o^n_{\vp(t)}\}_{t\ge0}$ converges in $L^1$, i.e., 
if the K\"ahler--Ricci flow (\ref{RFEq}) $\dC$-converges,
then it converges smoothly (exponentially fast).
\end{cor}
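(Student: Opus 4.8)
The plan is to deduce the statement from the uniform $C^0$ estimate already established in Theorem \ref{dCCzeroThm}, and then to feed that estimate into the standard regularity-and-convergence machinery for the K\"ahler--Ricci flow, upgrading at the very end to exponential convergence via the theorem of Phong--Song--Sturm--Weinkove. First I would observe that, by Corollary \ref{dCConv}, the hypothesis that $\{\o^n_{\vp(t)}\}_{t\ge0}$ converges in $L^1$ is exactly the assertion that the flow $\dC$-converges, which is precisely the hypothesis of Theorem \ref{dCCzeroThm}. Invoking that theorem immediately yields a bound $||\vp(t)||_{C^0(M)}<C$ with $C$ independent of $t$.

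With this $C^0$ bound in hand I would run the by-now-routine bootstrap. Combined with Perelman's scalar-curvature and diameter bounds along the flow, the uniform control on $\vp(t)$ feeds into the parabolic analogues of the Yau second-order and Calabi third-order estimates, after which parabolic Schauder theory promotes these to uniform $C^k$ bounds for every $k$ \cite{Y,Cao,ChT0,TZ,PSS,Pa}. The key point is that the $C^0$ bound rules out the volume concentration of Lemma \ref{VolumeConcLemma} and the associated proper Nadel sheaf of Theorem \ref{NadelRFThm}, so no potential escapes to the boundary of $\overline{(\calH,\dC)}$ (equivalently, leaves $\calE(M,\o)$, cf.~Theorem \ref{calHdCCompletion}). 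Having uniform $C^k$ bounds, Arzel\`a--Ascoli furnishes smooth subsequential limits, and the convergence theory of Chen--Tian, Tian--Zhu and others \cite{ChT0,TZ,PSS} identifies each such limit as a K\"ahler--Einstein metric, with convergence taking place smoothly modulo the action of $\Aut(M,J)$.

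The final step, and the point where the only genuine subtlety lies, is to upgrade this convergence-modulo-automorphisms to honest exponential convergence of the original flow $\{\o(t)\}$. This is exactly the content of the theorem of Phong--Song--Sturm--Weinkove \cite{PSSW}: once the flow converges modulo automorphisms to a K\"ahler--Einstein metric, it converges exponentially fast in every $C^k$ norm. Thus the hard part is not the regularity theory, which is entirely standard once the $C^0$ estimate is granted, but rather the removal of the automorphism ambiguity and the extraction of the exponential rate, for which \cite{PSSW} is tailor-made.

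Finally, I would note a slicker alternative route bypassing the $C^0$-bootstrap: since $L^1$-convergence of the volume forms means the flow does \emph{not} diverge in $\overline{(\calH,\dC)}$, the contrapositive of Theorem \ref{CRStabilityThm} (the absence of a K\"ahler--Einstein metric forces every Ricci flow to diverge in $\dC$) already forces $(M,J)$ to admit a K\"ahler--Einstein metric. The exponential smooth convergence then follows directly from the first half of the proof of Theorem \ref{CRStabilityThm}.
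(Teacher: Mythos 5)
Your proposal is correct and takes essentially the same route as the paper: Theorem \ref{dCCzeroThm} supplies the uniform $C^0$ bound, the standard parabolic bootstrap \cite{Y,Cao,ChT0,TZ,PSS,Pa} then gives smooth convergence up to automorphisms, and the theorem of Phong--Song--Sturm--Weinkove \cite{PSSW} upgrades this to exponential convergence of the original flow. Your alternative route via the contrapositive of Theorem \ref{CRStabilityThm} is also sound and is essentially implicit in the paper, since the proof of Theorem \ref{dCCzeroThm} itself concludes ``as in the proof of Theorem \ref{CRStabilityThm}.''
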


In the ambient space $\mathcal{M}$, 
it seems to be
difficult to find 
(nontrivial) 
settings under which
$\dE$-convergence implies a more synthetic-geometric notion of convergence, such
as Cheeger--Gromov, Lipschitz, or even Gromov--Hausdorff. Indeed, examples
show that $\dE$-convergence is too weak to control the geometry in any
way (for
further discussion, see \cite[\S 4.3]{Cl4}, \cite[\S 1, \S
5]{Cl5} and cf. \cite{An}), and one can use 
Theorem \ref{EquivGeneral} and
Corollary \ref{dCConv} to construct examples
that show this is also the case for $\calH\subset\calM$.
The previous corollary thus provides a rather striking instance of
such a setting.

Let $s(t):=\tr_{\o(t)}\Ric\o(t)$ denote the scalar curvature along the flow.
We also record the following weaker version of Corollary \ref{dCConvSmoothConvCor}.

\begin{cor}
\label
{LoneLtwoConvCor}
The K\"ahler--Ricci flow (\ref{RFEq}) converges smoothly if and only if
\begin{equation}
\label{LoneLtwoConvEq}
||s-n||_{L^1(\RR_+,L^2(M,\o(t)))}<\infty,
\end{equation}
i.e., if and only if it has finite $\dC$-length.

\end{cor}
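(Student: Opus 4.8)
The plan is to establish the equivalence \eqref{LoneLtwoConvEq} by connecting the quantity $\|s-n\|_{L^1(\RR_+,L^2)}$ directly to the $\dC$-length of the Ricci flow path, and then invoke Corollary \ref{dCConvSmoothConvCor} to pass from finite $\dC$-length to smooth convergence. First I would compute the $\gC$-speed of the flow. Along \eqref{RFEq}, the tangent vector is $\dot\o(t) = -\Ric\,\o(t)+\o(t) = \i\ddbar\dot\vp(t)$, where by \eqref{KRFEq} the potential rate satisfies $\Delta_{\vp(t)}\dot\vp(t) = \tr_{\o(t)}\i\ddbar\dot\vp(t) = \tr_{\o(t)}(\o(t)-\Ric\,\o(t)) = n - s(t)$. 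Using the alternative expression \eqref{AlternativeGCEq} for the Calabi metric, the $\gC$-speed is
\begin{equation*}
\big| \dot\o(t) \big|_{\gC}^2 = \int_M \big(\Delta_{\vp(t)}\dot\vp(t)\big)^2 \frac{\o_{\vp(t)}^n}{n!} = \int_M (s(t)-n)^2 \, \frac{\o(t)^n}{n!} = \|s(t)-n\|_{L^2(M,\o(t))}^2,
\end{equation*}
up to the normalization of the volume form (recall $\o(t)^n/n! = dV_{\o(t)}$). Hence the $\dC$-length of the flow is exactly (a constant multiple of) $\int_0^\infty \|s(t)-n\|_{L^2(M,\o(t))}\,dt = \|s-n\|_{L^1(\RR_+,L^2(M,\o(t)))}$, which immediately identifies the two conditions in the statement: finite $\dC$-length is equivalent to \eqref{LoneLtwoConvEq}.

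Next I would prove the main biconditional. For the direction asserting that \eqref{LoneLtwoConvEq} implies smooth convergence: finite $\dC$-length means the flow path is a $\dC$-Cauchy (in fact $\dC$-convergent, since the completion is a complete metric space) curve, so by Corollary \ref{dCConvSmoothConvCor} the flow converges smoothly and exponentially fast. For the converse: if the K\"ahler--Ricci flow converges smoothly, then by Perelman's theorem together with the work of Chen--Tian, Tian--Zhu and Phong--Song--Sturm--Weinkove \cite{ChT0,TZ,PSSW} (as already invoked in the proof of Theorem \ref{CRStabilityThm}), the convergence is exponential in every $C^k$ norm. In particular the metrics are uniformly equivalent and $\|s(t)-n\|_{L^2(M,\o(t))} \le C_1 e^{-C_2 t}$ for some $C_1,C_2>0$, so the $L^1(\RR_+,L^2)$ norm is finite, giving \eqref{LoneLtwoConvEq}.

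The main obstacle, and the only step requiring care beyond the energy computation, is the converse direction's reliance on the full exponential convergence machinery: one must ensure the scalar-curvature deviation $s(t)-n$ decays exponentially in the $L^2(M,\o(t))$ norm, which requires both the uniform equivalence of the metrics $\o(t)$ and the exponential $C^k$-convergence of the flow. This is exactly the content of the cited stability theorems, which presuppose the existence of a K\"ahler--Einstein metric; however, since smooth convergence of the flow forces the limit to be K\"ahler--Einstein (the flow is a gradient-type flow for the K-energy and a smooth fixed point must solve $\Ric\,\o_\infty = \o_\infty$), this hypothesis is automatically met. The forward direction is essentially immediate once the speed computation is in place, so the proof reduces to citing Corollary \ref{dCConvSmoothConvCor} and the established convergence results, with the energy identity above serving as the bridge between the analytic condition \eqref{LoneLtwoConvEq} and the geometric notion of finite $\dC$-length.
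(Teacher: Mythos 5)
Your proposal is correct and follows exactly the argument the paper leaves implicit: the speed identity $|\dot\o(t)|_{\gC} = \|s(t)-n\|_{L^2(M,\o(t))}$ (via $\Delta_{\vp(t)}\dot\vp(t) = n-s(t)$) identifies finite $\dC$-length with \eqref{LoneLtwoConvEq}, after which one direction is completeness of $\overline{(\calH,\dC)}$ plus Corollary \ref{dCConvSmoothConvCor}, and the converse is the exponential convergence supplied by \cite{ChT0,TZ,PSSW}, just as in the first paragraph of the proof of Theorem \ref{CRStabilityThm}. Your added care on the converse --- noting that a smooth limit must be K\"ahler--Einstein so that the exponential-decay theorems apply --- is exactly the right point to check, and your argument for it is sound.
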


This improves a result of Phong et al.~\cite{PSSW}, where
$||s-n||_{L^1(\RR_+,C^0(M))}<\infty$ is assumed instead.

\begin{remark}
{\rm

The problem of finding conditions for the convergence
of the K\"ahler--Ricci flow has been studied by many authors.
Many of these results involve assuming that the  
curvature tensor is uniformly bounded along the flow, combined
with some further analytic and algebraic conditions:
Phong et al.~assumed uniform curvature bounds, the vanishing
of the Futaki invariant and a certain stability condition
on the complex structure \cite{PS0,PSSW0}; 
Sz\'ekelyhidi \cite{Sz} assumed that the K-energy is bounded from below 
and the manifold is K-polystable (together with the curvature
bounds), and Tosatti \cite{To} then replaced the K-energy bound
by assuming asymptotic Chow semistability. 
Other results include:
convergence when the K-energy is bounded below and 
the first eigenvalue of the Laplacian on $T^{1,0}M$ 
is uniformly positive \cite{PSSW} (cf.~\cite{MSz,ZhZ}), and 
convergence when the evolving volume forms
satisfy $\o(t)^n\ge C\o^n$ \cite{Pa}. The list above is by no means
exhaustive and we refer to these articles for further references.

}
\end{remark}

\section{Some remarks and 
further study}
\label{FutureStudySection}

We end with some remarks and indicate some possible
directions for future study.

\subsection{Angles between $\mathcal{H}$ and conformal classes}
\label{sec:angl-betw-mathc}

Lemma \ref{TanHTrace}
is, geometrically, a statement that $\mathcal{H}$ intersects conformal
classes in $\mathcal{M}$ at a constant angle.  For $g \in
\mathcal{H}$, we consider the conformal class $\mathcal{P}g$, where we
recall that $\mathcal{P}$ denotes the group of smooth positive
functions on $M$, acting on $\mathcal{M}$ by pointwise multiplication.
Tangent vectors to $\mathcal{P}g$ are of the form $\rho g$ for $\rho
\in C^\infty(M)$.

Let $h \in T_g \mathcal{H}$ and $k\in T_g(\calP g)$ 
with $\Enorm{h} = \Enorm k=1$ be given.
Denote the pure trace part of $h$ by $h_T := \frac{1}{2n} \tr(g^{-1}
h) g$, and the traceless part of $h$ by $h_0 := h - h_T$.  On the one
hand, the decomposition $h = h_0 + h_T$ is orthogonal, so we have
$\E hh=:\Esq{h} = \Esq{h_0} + \Esq{h_T}$.  
On the other hand, by Lemma \ref{TanHTrace},
$\Esq{h} = 
 n \Esq{h_T},$ 
or $\Enorm{h_T} = n^{-1/2}$.
We then have 
$\E hk = \E {h_T}k \leq \Enorm{h_T}\Enorm k = n^{-1/2},$
with equality if and only if $k = \sqrt{n} h_T$.
Hence, the angle between $\mathcal{H}$ and
$\mathcal{P} g$ is $\cos^{-1}(n^{-1/2})$, independently of $g \in
\mathcal{H}$.  In particular, in the case $n = 1$, the angle is
$0$---reflecting the fact that $\mathcal{H}$ is contained within a
single conformal class.  For $n = 2$, the angle is $\pi/4$, and
the tangent space to the space of \K metrics lies ``halfway" between those
of $\calP g$ and $\calM_\mu$, and as $n$ grows
the solution of the Calabi--Yau equation diverges more and 
more from a conformal transformation.

Note also that given the geometric description of $\mathcal{V} \subset
\tilv$ in 
\S\ref{IntrinsicExtrinsicVolSubS}, the space
$(\mathcal{V}, \dV)$ can be readily shown to have constant positive
curvature, both in the sense of the Riemannian sectional curvature,
and well as in the synthetic sense of Alexandrov.

To summarize, $\mathcal{H}$ is a section of a sphere isometrically
embedded in $\mathcal{M}$, and each conformal class $\mathcal{P} g$ is
an incomplete, isometrically embedded Euclidean domain.  Since each of
these flat spaces intersects the spherical section at the same angle,
we expect the former to all meet in the completion of $(\calM,\dE)$---as 
indeed they do, 
at the point represented by the zero tensor (this follows from
\cite[Prop.~4.1]{Cl3}, which implies that $\dE(\lambda g_1, \lambda g_2)
\rightarrow 0$ as $\lambda \rightarrow 0$ for any $g_1, g_2 \in
\mathcal{M}$).

\subsection{Other metrics on $\calH$ and on $\calM$} 
\label
{OtherMetricsSubsection}

As mentioned in the introduction,
one may consider different metrics on $\calH_\o$. 
Currently of greatest interest, perhaps, is the Mabuchi
metric $\gM$ (\ref{MabuchiMetricEq}), in part due to its intimate relation
to several important problems in \K geometry concerning the existence of
canonical metrics \cite{M,Do1,Ch1,Ch2,ChT}.
Geodesics of $\gM$ are solutions of a 
homogeneous complex Monge-Amp\`ere equation (HCMA). At present it is not
known how to construct the exponential map of $\gM$, or equivalently
how to solve the Cauchy problem for the HCMA. This problem seems
quite difficult due to issues of ill-posedness, and it seems plausible
that most directions will not exponentiate to geodesics in $\calH$.
We refer to \cite{RZ1,RZ2} for more precise statements.
Even the more standard Dirichlet problem of constructing a geodesic
between two given metrics is not completely understood, although
much progress has been made by Chen and Tian toward a partial 
regularity theory \cite{Ch1,ChT}, and this has been used
by Chen to study the geodesic distance induced by $\gM$ \cite{Ch2}.

Thus, while geodesics in $\gC$ are not completely
explicit, they are still considerably simpler to understand, 
and it would be of interest to compare the Mabuchi geometry 
to that of Calabi. Since the length of
smooth minimizing geodesics in $(\calH_\o,\gM)$ need not be uniformly bounded (e.g.,
a one-parameter family of automorphisms induces a geodesic line),
one may only ask whether the Mabuchi distance dominates the Calabi
distance. An analogous problem would be to compare the Donaldson metric \cite{Do3}
(see also \cite{ChH}), which is an analogue of the Mabuchi metric on $\calV$, 
to the metric $\gV$ induced from $(\calM,\gE)$. 
Another metric on $\calH$ can be defined by the $L^2$ norm of
the gradient \cite{Ca}, 
and again, it would be interesting to
compare it to $\gC$ and $\gM$. 
Similarly, one could also consider metrics involving more
derivatives than $\dC$, and these can be induced by
metrics on $\calM$. Stronger metrics might lead to
notions of convergence that could be of more use in various
geometric settings, where the rather weak notion of convergence
associated with $\gE$ is often insufficient (cf. \cite{Cl4}).

\subsection{Other submanifolds of metrics}
As noted in \S\ref{InducedMetricSection}, the Calabi metric, intrinsically
defined in terms of \K potentials, is obtained by restriction
of the $L^2$ metric on $\calM$. One possible application 
of this is that one can define natural metrics
on other submanifolds of $\calM$,
e.g., spaces of almost-\K metrics, where the $\ddbar$-lemma is absent.
The question of whether the induced geometry can be understood
successfully is then essentially equivalent to whether a version 
of the Calabi--Yau Theorem exists in those settings, itself a topic
of current research \cite{Do2,TWY}.

\subsection{K\"ahlerian and Riemannian extensions of CR-stability}
\label{CRExtensionSubsec}

Definition \ref{CRDef} can be extended to an arbitrary polarized
\K manifold, for instance, by considering generalizations of the K\"ahler-Ricci
flow whose stationary points are constant scalar curvature 
or extremal metrics (see \cite[\S3]{R3} and \cite[\S7-9]{R2}). 
A natural question is whether a result corresponding to Theorem \ref{CRStabilityThm}  
holds in these more general cases.
We also remark that multiplier ideal sheaves can also be constructed 
for the Ricci iteration, a discrete version of the K\"ahler-Ricci flow introduced in \cite{R2},
for which a similar, and likely equivalent,  notion of stability may be defined.

On the other hand,
by the equivalence of $\dC$ and $\dE$ (Theorem \ref{EquivGeneral})
and the fact that the (volume normalized) Ricci flow preserves $\calH$,
the notion of CR-stability for a Fano manifold
is a purely Riemannian one, i.e., it can be stated in terms
of $(\calM,\dE)$.
Thus this notion
can be extended naturally to any Riemannian
manifold.
It is then an interesting problem whether Theorem \ref{CRStabilityThm}
has a suitable analogue for the Ricci flow and Einstein metrics
in this more general setting.

\bigskip
\noindent
{\bf Acknowledgements.}
This material is based upon work supported in part by NSF grants 
DMS-0902674, 0802923.
We thank S.~Dinew and V. Tosatti for useful discussions and B.~Klartag for
telling us about \cite{CG}.

\end{document}